\newcommand{\Z}{\mathbb{Z}}
\newcommand{\C}{\mathbb{C}}
\newcommand{\Q}{\mathbb{Q}}
\newcommand{\Spec}{\text{Spec}}
\newcommand{\brac}[1]{\big[ #1 \big] }
\newcommand{\dbrac}[1]{\llbracket #1 \rrbracket}
\newcommand{\cbrac}[1]{\{ #1 \}}
\newcommand{\Nil}{\textup{Nil}}
\newcommand{\dia}{\textbf{dg}}
\newcommand{\had}{\textbf{hg}}
\newtheorem{theorem}{Theorem}[section]
\newtheorem{lemma}[theorem]{Lemma}
\newtheorem{proposition}[theorem]{Proposition}
\newtheorem{corollary}[theorem]{Corollary}
\newtheorem{question}[theorem]{Question}
\newtheorem{example}[theorem]{Example}
\theoremstyle{definition}
\newtheorem{remark}[theorem]{Remark}
\newtheorem{definition}[theorem]{Definition}
\def\frak{\relaxnext@\ifmmode\let\next\frak@\else
 \def\next{\Err@{Use \string\frak\space only in math mode}}\fi\next}
\def\goth{\relaxnext@\ifmmode\let\next\frak@\else
 \def\next{\Err@{Use \string\goth\space only in math mode}}\fi\next}
\def\frak@#1{{\frak@@{#1}}}
\def\frak@@#1{\noaccents@\fam\euffam#1}
\font\tengoth=eufm10
\def\goth{\fam\gothfam\tengoth} \textfont\gothfam=\tengoth
\title{The diagonal and Hadamard grade of hypergeometric functions}
\author{Andrew Harder and Joe Kramer-Miller}
\date{}
\begin{document}
\maketitle
\abstract{Diagonals of rational functions are an important class of functions arising in number theory, algebraic geometry, combinatorics, and physics. In this
paper we study the diagonal grade of a function $f$, which is defined to be the smallest
$n$ such that $f$ is the diagonal of a rational function in variables $x_0,\dots, x_n$. We relate the diagonal grade of a function
to the nilpotence of the associated differential equation. This allows
us to determine the diagonal grade of many hypergeometric functions
and answer affirmatively the outstanding question on the existence of
functions with diagonal grade greater than $2$.
In particular, we show that $\prescript{}{n}F_{n-1}(\frac{1}{2},\dots, \frac{1}{2};1\dots,1 \mid x)$ has diagonal grade $n$ for each $n\geq 1$.
Our method also applies to the generating function of the Apéry sequence, 
which we find to have diagonal grade $3$. We also answer related questions
on Hadamard grades posed by Allouche and Mendès France. 
For example, we show that $\prescript{}{n}F_{n-1}(\frac{1}{2},\dots, 
\frac{1}{2};1\dots,1 \mid x)$ has Hadamard grade $n$ for all $n\geq 1$.
}
\section{Introduction}
Fix a finite extension $K$ of $\Q$ and consider $h=\frac{P}{Q}$, where $P,Q \in K[x_0,\dots,x_n]$. If $Q(0,\dots,0) \neq 0$, there is a Taylor series expansion of $h$ at $(0,\dots,0)$ given by
\begin{equation*}
    h = \sum_{i_0,\dots,i_n\geq 0} a_{i_0,\dots,i_n}x_0^{i_0}\dots x_n^{i_n}.
\end{equation*}
The diagonal of $h$ is the univariate power series defined by
\begin{align*}
    \Delta_n(h) = \sum_{i\geq 0} a_{i,\dots,i} x^i.
\end{align*}
Diagonals are an important class of functions arising in number theory, algebraic geometry, combinatorics, and physics. For instance, many hypergeometric functions are diagonals as well as the generating function for Apéry's sequence. An important conjecture of Christol
states that $f$ is a diagonal if and only if $f$ is globally bounded (i.e. $f$ is $D$-finite, has a positive radius of convergnece in $\C$, and there exists $N,c \geq 0$ with $cf(Nx) \in \Z\dbrac{x}$). It is natural to ask about the minimum number
of variables needed to express a function as a diagonal. This motivates the following definition:
\begin{definition}
    Let $f(x) \in K\dbrac{x}$ be a nonzero power series. The diagonal grade of $f(x)$, written
    $\dia(f)$, is the smallest number $n$ such that there is a rational
    function $h$ in variables $x_0,\dots,x_n$ with $f=\Delta_n(h)$.
    If no such $n$ exists we take $\dia(f)=\infty$.
\end{definition}
We also introduce the following $K$-vector spaces:\footnote{Some sources, such as \cite{melczer2021invitation}, define $\mathcal{D}_k$ to be
diagonals of $k$ variables instead of $k+1$ variables. Our choice of indexing is motivated by \S 4, which makes $\mathcal{D}$ into a filtered ring.}
\begin{align*}
    \mathcal{D}_k &\coloneqq \left \{ f(x) \in K\dbrac{x} ~\middle | ~\dia(f) \leq k \right \}, \\
    \mathcal{D} &\coloneqq \left \{ f(x) \in K\dbrac{x} ~\middle |~ \dia(f) <\infty \right \}.
\end{align*}
    Note that $\mathcal{D}_0$ is the space of rational functions
    and by Furstenberg's theorem \cite{Furstenberg} we know $\mathcal{D}_1$ is the
    space of algebraic functions. It is also known that $\mathcal{D}_1 \subsetneq \mathcal{D}_2$, as the hypergeometric function $\prescript{}{2}F_{1}(\frac{1}{2}, \frac{1}{2};1 \mid x)$ can be expressed as a diagonal in three variables, but is known to be transcendental. Thus, we may think of the diagonal grade
    as a trascendental measure for globally bounded series. 
    However, in general it is an open question $\mathcal{D}_k \subsetneq \mathcal{D}_{k+1}$ for all $k$ or even if $\mathcal{D}_2 \subsetneq \mathcal{D}_k$ for some $k\geq 3$ (see e.g. \cite[\S 3]{Bostan-Lairez-Salvy-Multiple_binomial_sums} or \cite[Open Problem 3.1]{melczer2021invitation}.) In this article, we give an affirmative answer to this question. More precisely, we establish
    the diagonal grade of many well-known diagonals: E.g. we show that the hypergeometric
    functions $\prescript{}{n}F_{n-1}(\frac{1}{2},\dots, \frac{1}{2};1\dots,1 \mid x)$ has diagonal grade $n$ (see Theorem \ref{t:all beta = 1}) and we show that
    the generating function for Ap\'ery's sequence has diagonal grade $3$ (see Example \ref{e: Apery}). 

    A closely related notion of Hadamard grade was introduced
    by Allouche and Mendès France \cite{allouche2011hadamard}.
    Recall that the Hadamard product of two series $f(x) = \sum a_nx^n$
    and $g(x) = \sum b_nx^n$ is defined as 
    \begin{align*}
        f*h &= \sum a_nb_n x^n.
    \end{align*}
    In general, the Hadamard product of two diagonals is again a
    diagonal. 
    \begin{definition}
        Let $f(x) \in K\dbrac{x}$. The Hadamard grade of $f(x)$,
        which we denote by $\had(f)$, is defined as $0$ if $f$ is rational and is otherwise defined as the smallest $k$ such that there exists
        algebraic series $h_1(x),\dots,h_k(x)$ with
        \begin{align*}
            f(x) &= h_1* \dots *h_k.
        \end{align*}
        If no such $k$ exists we set $\had(f)=\infty$.
        \end{definition}
    As before, we define the following sets:
    \begin{align*}
    \mathcal{H}_k &\coloneqq \left \{ f(x) \in K\dbrac{x} ~\middle |~ \had(f) \leq k \right \}, \\
    \mathcal{H} &\coloneqq \left \{ f(x) \in K\dbrac{x} ~\middle |~ \had(f) <\infty \right \}.
\end{align*}
    However, we emphasize that it is not at all clear if $\mathcal{H}_k$
    or $\mathcal{H}$ are vector spaces. Allouche and Mendès France
    ask similar questions to those asked above about diagonal grades. For example, is $\mathcal{H}_k \subsetneq \mathcal{H}_{k+1}$ for all $k$? Equivalently, they ask if there exists a series $f$ with $\had(f)=k$ for every $k\geq 0$
    and specifically ask if $\prescript{}{n}F_{n-1}(\frac{1}{2},\dots, \frac{1}{2};1\dots,1 \mid x)$ has Hadamard grade $n$. 
    Assuming the very difficult Lang--Rohrlich conjecture
    on algebraic independence of gamma values, Rivoal and Roques
    established the Hadamard grade of many hypergeometric functions in
    \cite{rivoal2014hadamard}. In particular, they affirmatively answer
    both questions stated above. In this article, we unconditionally show $\mathcal{H}_k \subsetneq \mathcal{H}_{k+1}$ and determine the Hadamard grade of several hypergeometric functions. 

    This article is organized as follows. In \S \ref{s: local monodromy bounds} we establish
    lower bounds on $\dia(f)$ and $\had(f)$ in terms of
    the local monodromy of the differential equation satisfied by $f$.
    This lower bound, in particular, establishes `half' of a recent conjecture 
    of put forth by a group of physicists \cite{hassani2025diagonals_conjecture}. The key ingredients to this lower
    bound are Deligne's open monodromy theorem and the realization
    of diagonals as solutions to certain Picard--Fuchs equations.
    In \S \ref{s: applications} we use the lower bound from
    \S \ref{s: local monodromy bounds} to bound the
    diagonal grade and Hadamard grade of a large class
    of hypergeometric functions. In many interesting cases, we
    can completely determine the diagonal grade and Hadamard grade,
    thus showing the strict inclusions $\mathcal{D}_k \subsetneq \mathcal{D}_{k+1}$ and $\mathcal{H}_k \subsetneq \mathcal{H}_{k+1}$. As a proof of
    concept, we also show that
    the generating function for Apéry's sequence has diagonal grade $3$
    and give some explicit examples arising in physics.
    The same reasoning works for many of the examples put forth in \cite{hassani2025diagonals_conjecture} and
    \cite{tables_of_calabi_yau}.
    Finally, in \S \ref{s: the ring of diagonals} we investigate
    some basic properties of the ring $\mathcal{D}$, where multiplication
    is given by Hadamard product. We show that $\mathcal{D}_{k_1}*\mathcal{D}_{k_2} \subset \mathcal{D}_{k_1+k_2}$. We also show that the zero divisors
    of $\mathcal{D}$ can be completely explained by the action of
    roots of unity on $\mathcal{D}$.

\section{Local monodromy bounds} \label{s: local monodromy bounds}
    For this section we fix a finite extension $K$ of $\Q$.

    \subsection{Background on differential modules and nilpotent monodromy}
    \begin{definition}
        A differential module over $K(x)$ is a finite dimensional
        vector space $M$ and a $K$-linear map $\partial:M \to M$
        satisfying the Leibnitz rule: 
        \[\partial(fm) = x\frac{d}{dx}f \cdot m + f \cdot \partial(m).\]
        We point the reader to \cite[Chapter 2]{vanderPut-Singer-book}
        for the basic theory of differential modules.
    \end{definition}
	\begin{definition}
	    Let $M$ be a differential module over $K(x)$. We say that $M$
        is \emph{regular} at $x=0$ if there is a $k[x]$-lattice $M_0$ (i.e.
        a $K[x]$-module $M_0 \subset M$ with $M_0 \otimes_{K[x]} K(X) = M$)
        such that $\partial(M_0) \subset M_0$. By formal
        Fuchsian theory (see \cite{Dwork-G-functions}), the $K$-linear
        map on $T:M_0/xM_0 \to M_0/xM_0$ decomposes $T=UD=DU$, where $D$
        is a diagonal matrix and $U$ is unipotent. We define the
        \emph{nilpotence index}, denoted by $\Nil(M)$, to be the smallest 
        integer $k$ such that $(U-1)^k=0$. We remark
        that $\Nil(M)=\Nil(M^\vee)$, where $M^\vee$ is the dual
        differential module.
	\end{definition}
    \begin{definition}\label{d: nilpotence exponent}
        Let $f$ be a $D$-finite element of $k\dbrac{x}$ (i.e.
        a series annihilated by an element of $k(x)[\partial]$) and
        let $L_f = \partial^k + a_{k-1}\partial^{k-1} + \dots + a_0$
        be the minimal differential operator annihilating $f$. 
        Let $M_f$ be the differential module associated to $L_f$. We define
        the exponent of nilpotence of $f$ to be $\Nil(f)=\Nil(M_f)$
    \end{definition}

    \begin{lemma}\label{l: bounding nilpotence of individual entries}
		Let $M$ be a differential module over $K(x)$. Let
		$s:M \to K\dbrac{x}$ be a horizontal map. Then
		for any $\alpha \in s(M)$ we have $\Nil(\alpha) \leq \Nil(M)$.
	\end{lemma}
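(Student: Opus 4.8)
The plan is to reduce to the cyclic case and then compare minimal differential operators. Fix $m\in M$ with $s(m)=\alpha$; we may assume $\alpha\neq 0$ (so $m\neq 0$), and we may assume $M$ is regular at $x=0$, since otherwise $\Nil(M)=\infty$ and there is nothing to prove. Let $N:=K(x)[\partial]\cdot m\subseteq M$ be the differential submodule generated by $m$. It is finite-dimensional over $K(x)$ because $M$ is, and it is $\partial$-stable by construction, hence a differential module, regular at $0$. The strategy is: (i) show $\Nil(N)\leq\Nil(M)$; (ii) show that the differential module $M_\alpha$ attached to the minimal operator $L_\alpha$ of $\alpha$ is a quotient of $N$, and that $\Nil$ cannot increase under quotients. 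Combining, $\Nil(\alpha)=\Nil(M_\alpha)\leq\Nil(N)\leq\Nil(M)$.

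For (i), pick a $\partial$-stable $K[x]$-lattice $M_0\subseteq M$ and set $N_0:=N\cap M_0$. One checks that $N_0$ is a $\partial$-stable lattice of $N$; the only point requiring care is that $N_0\cap xM_0=xN_0$, which holds because $N$ is a $K(x)$-subspace of $M$ and is therefore closed under dividing by $x$. Consequently the natural map $N_0/xN_0\to M_0/xM_0$ is injective and equivariant for the residue operators $T$. Since the multiplicative Jordan decomposition $T=UD$ is compatible with restriction to invariant subspaces (because $U$ and $D$ are polynomials in $T$), the unipotent part of $T$ on $N_0/xN_0$ is the restriction of that on $M_0/xM_0$, so $(U-1)^k=0$ on $M_0/xM_0$ forces the same on $N_0/xN_0$; hence $\Nil(N)\leq\Nil(M)$. (That $\Nil$ is independent of the choice of lattice is part of formal Fuchsian theory, as already used in its definition.)

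For (ii), $N$ is cyclic: the map $K(x)[\partial]\to N$, $P\mapsto Pm$, is a surjective morphism of differential modules whose kernel is the left ideal of operators annihilating $m$, which is principal, generated by a minimal monic operator $L_N$; thus $N\cong K(x)[\partial]/K(x)[\partial]L_N=M_{L_N}$. Viewing $K\dbrac{x}$ inside the differential $K(x)$-algebra $K((x))$, horizontality of $s$ gives $L_N\alpha=s(L_Nm)=s(0)=0$, so $L_N$ annihilates $\alpha$. By right division in the (left) Euclidean ring $K(x)[\partial]$, the minimal operator $L_\alpha$ of $\alpha$ right-divides $L_N$, i.e. $K(x)[\partial]L_N\subseteq K(x)[\partial]L_\alpha$, yielding a surjection of differential modules $N\cong M_{L_N}\twoheadrightarrow K(x)[\partial]/K(x)[\partial]L_\alpha=M_\alpha$. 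Rerunning the lattice argument of the previous paragraph for a quotient — taking the image of a $\partial$-stable lattice, where now the reduction mod $x$ is surjective and $U$ induces the unipotent part of the quotient operator — gives $\Nil(M_\alpha)\leq\Nil(N)$, which completes the proof.

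The main technical point is the behaviour of $\Nil$ under subquotients: concretely, that $N_0=N\cap M_0$ and the image lattice in the quotient really are $\partial$-stable lattices whose reduction mod $x$ is compatible with $T$ and its unipotent part. Everything else is formal. If one prefers to avoid lattices, the same content can be phrased through local monodromy: the local monodromy representation of a subquotient differential module is a subquotient of that of $M$, and the size of the largest Jordan block of the unipotent part of a subquotient of an operator cannot exceed that of the operator itself.
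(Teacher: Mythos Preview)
Your approach is essentially the paper's: pass to the cyclic submodule $N\subseteq M$ generated by a preimage of $\alpha$, relate $N$ to $M_\alpha$, and use that $\Nil$ does not increase along this chain. The paper carries out the middle step by exhibiting the horizontal section $\sum_i \alpha^{(i)} e_i^\vee$ of $N^\vee$ and then asserting that $N^\vee$ \emph{is} $M_\alpha$; it also invokes $\Nil(N^\vee)=\Nil(N)$ and $\Nil(N)\leq\Nil(M)$ without comment. Your version is more careful on both points. First, you only claim that $M_\alpha$ is a \emph{quotient} of $N$, which is what is actually true in general: the paper's identification would force $\deg L_\alpha=\rank N$, and this fails whenever $s|_N$ has nontrivial kernel (for instance, take $M=N$ of rank $2$ with $\partial$ acting on a basis $e_0,e_1$ by $\partial e_0=e_1$, $\partial e_1=0$; then any horizontal $s$ into $K\dbrac{x}$ has $s(e_1)=0$ and $\alpha=s(e_0)$ a constant, so $L_\alpha=\partial$ while $\rank N=2$). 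Second, you supply the monotonicity of $\Nil$ under submodules and quotients via an explicit lattice argument ($N_0=N\cap M_0$, image lattice in the quotient, compatibility of the Jordan decomposition), which the paper leaves implicit. So the strategies coincide, but your write-up closes the gaps in the paper's sketch.
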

	\begin{proof}
		Let $e_0 \in s^{-1}(\alpha)$ and let $N$ be the smallest
		differential module over $K(x)$ containing $e_0$. Let
		$r$ be the rank of $N$ and let $e_i = D^i(e_0)$. Then
		$e_0, \dots,e_{r-1}$ form a basis of $N$ over $K(x)$
		and $s(e_i) = \frac{d}{dx}^i \alpha$.
		In particular, we see that $\sum_{i=0}^{s-1} \alpha^{(i)}e_i^\vee$
		is a solution to $N^\vee$, where $e_0^\vee, \dots, e_{s-1}^\vee$
		denotes the basis dual to $e_0,\dots,e_{s-1}$. Thus,
		$N^\vee$ is the differential module associated
		to the minimal differential operator satisfied by $\alpha$. Thus,
		\begin{equation*}
			\Nil(\alpha) = \Nil(N^\vee) = \Nil(N) \leq \Nil(M).\qedhere
		\end{equation*}
	\end{proof}

    \subsection{Main results}
    We define $K\cbrac{x_0,\dots,x_n}$ to be the subring of $K\dbrac{x_0,\dots,x_n}$
    consisting of series that are algebraic over $K(x_0,\dots,x_n)$.
    We freely use the theory of modules with connection over an algebraic
    variety (see e.g. \cite[S 1]{katz_nilpotent_monodromy_theorem}
    for a quick introduction.)
    \begin{theorem}
        \label{t: nilpotence bound for diagonal}
        Let $h \in K\cbrac{x_0,\dots,x_n}$ 
        and set
        \[f(x) = \Delta_n(h(x)). \]
        Then $\Nil(f) \leq n+1$ and if $h$ is rational then $\Nil(f) \leq n$.
    \end{theorem}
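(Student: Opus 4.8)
The plan is to realise $f$ as a period of an explicit Gauss--Manin connection and then combine Lemma~\ref{l: bounding nilpotence of individual entries} with the local monodromy theorem. After the monomial substitution $x_0\mapsto x/(x_1\cdots x_n)$, extracting the coefficient of $x_1^0\cdots x_n^0$ shows that, for $|x|$ small,
\[
 f(x)=\frac{1}{(2\pi i)^n}\oint_{|x_1|=\epsilon_1}\cdots\oint_{|x_n|=\epsilon_n} g(x,x_1,\dots,x_n)\,\frac{dx_1}{x_1}\cdots\frac{dx_n}{x_n},
 \qquad g(x,x_1,\dots,x_n)\coloneqq h\!\left(\tfrac{x}{x_1\cdots x_n},x_1,\dots,x_n\right),
\]
where the $\epsilon_i$ are small enough that the torus $\gamma_x=\{\,|x_i|=\epsilon_i\,\}$ avoids the polar locus (and, when $h$ is only algebraic, the branch locus) of $g(x,-)$ --- possible since $h$ is regular at the origin. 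Thus $f$ is the period over $\gamma_x$ of the algebraic $n$-form $\omega_x=g\,\tfrac{dx_1}{x_1}\wedge\cdots\wedge\tfrac{dx_n}{x_n}$ on the smooth affine $n$-fold $\mathcal V_x$, where $\mathcal V_x=\mathbb G_m^n\setminus\{g(x,-)=\infty\}$ when $h\in K(x_0,\dots,x_n)$, and in general $\mathcal V_x$ is obtained by deleting the polar and branch locus of $g(x,-)$ from the normalisation of the finite cover of $\mathbb G_m^n$ defined by an equation for $h$. Letting $x$ vary, these assemble into a smooth affine family $\pi\colon\mathcal V\to U$ over a Zariski-open $U\subseteq\mathbb A^1$ carrying the relative form $\omega$; let $\mathbf M$ be the differential module over $K(x)$ coming from the Gauss--Manin connection on $H^n_{\mathrm{dR}}(\mathcal V/U)$. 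It is regular singular at every point of $\mathbb P^1$, in particular at $x=0$.

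Continuing the shrinking torus $\gamma_x$ around $x=0$ (where it is single-valued) and integrating defines a horizontal map $s\colon\mathbf M\to K((x))$, $[\omega']\mapsto(2\pi i)^{-n}\int_{\gamma_x}\omega'$ --- horizontality being the flatness of $\gamma_x$ together with differentiation under the integral sign --- and $s([\omega])=f\in K\dbrac{x}$. Applying Lemma~\ref{l: bounding nilpotence of individual entries} (its proof only uses $s$ on the cyclic sub-differential-module $N$ generated by $\omega$, on which the normalised period map is $K\dbrac{x}$-valued because those classes are regular at $x=0$; one also uses the elementary fact that $\Nil$ does not increase along a sub-differential-module, via the lattice $N_0=N\cap\mathbf M_0$, which injects $\partial$-equivariantly into $\mathbf M_0/x\mathbf M_0$) we obtain $\Nil(f)\le\Nil(\mathbf M)$. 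It remains to bound $\Nil(\mathbf M)$, i.e. the nilpotence index of the local monodromy of $H^n_{\mathrm{dR}}(\mathcal V/U)$ at $x=0$ (indeed at any point of $\mathbb P^1$).

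Here the input is the local monodromy theorem: by Grothendieck's quasi-unipotence theorem refined by the monodromy weight filtration --- Deligne's theory of mixed Hodge structures in the open/singular case, see also Katz \cite{katz_nilpotent_monodromy_theorem} --- the unipotent part $T_u$ of the local monodromy of $H^i_{\mathrm{dR}}$ of an algebraic family satisfies $(T_u-1)^{i+1}=0$. Since $\mathbf M=H^n_{\mathrm{dR}}$ of a family of $n$-folds, this gives $\Nil(\mathbf M)\le n+1$, hence $\Nil(f)\le n+1$, proving the second assertion. When $h$ is rational we do better: then $\mathcal V_x=\mathbb G_m^n\setminus D_x=\mathbb A^n\setminus D_x''$, where $D_x''$ is the polar hypersurface of $\omega_x$ (the union of $D_x$ with $\{x_1\cdots x_n=0\}$), of dimension $n-1$. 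Because $H^n_{\mathrm{dR}}(\mathbb A^n)=0$ for $n\ge1$, the residue/Gysin sequences of the pair $(\mathbb A^n,D_x'')$, together with resolution of singularities, present $H^n_{\mathrm{dR}}(\mathbb A^n\setminus D_x'')$ as a subquotient of the cohomology in degrees $\le n-1$ of the (at most $(n-1)$-dimensional, and smooth after resolution) strata of $D_x''$; performing this relatively over $U$ exhibits $\mathbf M$ as a sub of Gauss--Manin connections on $H^{\le n-1}_{\mathrm{dR}}$ of families of $(n-1)$-folds, each of which has nilpotence index $\le(n-1)+1=n$ by the case already treated. Hence $\Nil(\mathbf M)\le n$ and $\Nil(f)\le n$.

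The main obstacle is the rational improvement --- equivalently, the passage from $H^n_{\mathrm{dR}}$ of the affine complement to $H^{\le n-1}_{\mathrm{dR}}$ of $(n-1)$-folds --- carried out in families over $U$ and compatibly with the connection: this requires a relative resolution of $D_x''$ and the mixed-Hodge / residue machinery, and the vanishing $H^n_{\mathrm{dR}}(\mathbb A^n)=0$ (in contrast to $H^n_{\mathrm{dR}}(\mathbb G_m^n)\neq0$) is exactly what kills the top-degree term and yields the extra ``$-1$''. The remaining ingredients --- defining the Gauss--Manin module when the fibres are non-proper or singular, horizontality of the period map, the $(2\pi i)^n$-normalisation needed to invoke Lemma~\ref{l: bounding nilpotence of individual entries}, and the fact that a diagonal is $D$-finite so that $\Nil(f)$ is defined --- are routine.
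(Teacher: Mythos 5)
Your general bound $\Nil(f)\le n+1$ follows essentially the paper's own route: realize $f$ as a period of the Gauss--Manin connection of the family cut out by $x_0\cdots x_n=x$, bound the local nilpotence at $x=0$ by the Deligne--Katz monodromy theorem, and descend to $f$ via Lemma \ref{l: bounding nilpotence of individual entries}; that part is sound (the paper handles the map to power series by Christol's theorem and the merely-algebraic case by a finite \'etale cover, and your analytic rephrasing is acceptable). The genuine gap is in the rational case. Your d\'evissage presents $H^n_{dR}(\mathbb{A}^n\setminus D''_x)$, via residue/Gysin spectral sequences after resolution, as a \emph{subquotient of a filtered object} whose graded pieces come from $H^{\le n-1}$ of $(n-1)$-dimensional families; but the nilpotence index of the unipotent local monodromy is not bounded by its maximum on the graded pieces of a filtration --- an extension of two modules on which the monodromy is trivial can already have index $2$, and in general indices add along extensions --- so $\Nil(\mathbf M)\le n$ does not follow. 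You silently upgrade ``subquotient of the cohomology of the strata'' to ``a sub of a direct sum of Gauss--Manin connections,'' and that upgrade (injectivity of the iterated residue maps, or a strictness/weight argument replacing it) is exactly the nontrivial point; it is essentially the Hodge-theoretic content you describe as routine. A second problem with the same step: after resolving $(\mathbb{A}^n,D''_x)$ to a normal crossings pair, the ambient space is a blowup of $\mathbb{A}^n$ whose middle cohomology need not vanish, so the vanishing $H^n_{dR}(\mathbb{A}^n)=0$ that is supposed to produce the extra ``$-1$'' is no longer available in the setting where your spectral sequence actually lives.

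The paper gets the improvement differently and more cheaply: it invokes Katz's refined form of Deligne's open local monodromy theorem, which bounds the nilpotence index by the number of indices $q$ with $H^{q}(X,\Omega^{n-q}_{X/L}(\log Y))\neq 0$ for a smooth proper normal-crossings compactification $X$ of the total space. When $h$ is rational the total space has function field $L(x_1,\dots,x_n)$, so $X$ is a rational variety and $H^n(X,\mathcal{O}_X)=0$ (a birational invariant), which deletes the $q=n$ term and yields the bound $n$ in one stroke. Either adopt that Hodge-number count, or supply a genuine injectivity statement for your residue maps; as written, the rational case is not proved.
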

    \begin{proof}
        Since $h$ is algebraic, there exists an
        affine open subspace $U$ of $\Spec(K[x_0^{\pm 1},\dots,x_n^{\pm 1}])$
        and a module with integrable connection $(N,\nabla)$ on $U$ such that:
        \begin{enumerate}
            \item $N$ is irreducible.
            \item There exists $e \in N$ such that the map
            $(N,\nabla) \to K\dbrac{x_0,\dots,x_n}$ sending $e \mapsto h$ is 
            horizontal, where $K\dbrac{x_0,\dots,x_n}$ has the trivial
            connection.
        \end{enumerate}
        After shrinking $U$, there exists a finite \'etale map $\psi: V \to U$
        so that the function field of $V$ contains $h$. Note that
        $N$ is a subobject of $\psi_*(\mathcal{O}_V,d)$, the pushforward
        of the trivial module with connect on $V$ along the map $\psi$.
        Let $\widetilde{U}$ (resp. $\widetilde{V}$) denote the fiber
        of $U$ (resp. $V$) and $\Spec(K[x][x_0,\dots,x_n]/x_0\dots x_n-x)$ over
       $\Spec(K[x][x_0,\dots,x_n])$. Let $\widetilde{\psi}: \widetilde{V} \to 
       \widetilde{U}$ denote the pullback of $\psi$ and note that
       $\widetilde{\psi}$ is a finite \'etale map. We set $\widetilde{N}$ to be the pullback
       of $N$ to $\widetilde{U}$ and again we note that $\widetilde{N}$
       is a subobject of $\widetilde{\psi}_*(\mathcal{O}_{\widetilde{V}},d)$.

        Let $\pi:\widetilde{U} \to \Spec(K[x])$ be the natural map. The Leray spectral sequence gives $R^{n}(\pi \circ \widetilde{\psi})_*(\mathcal{O}_{\widetilde{V}},d)
	=R^{n} \pi_* (\widetilde{\psi}_*(\mathcal{O}_{\widetilde{V}},d))$, so we
	see that $R^{n}\pi_* \widetilde{N}$ is a subojbect of 
	$R^{n}\psi_*(\mathcal{O}_{\widetilde{V}},d)$. By Hironaka's
    resolution of singularities, after replacing $K(X)$
    with a finite extension $L$, there is a proper and smooth
    morphism $\rho: X \to \Spec(L)$ and a divisor $Y$ with normal crossings
    such that $X-Y \cong \widetilde{V} \times L$ and the restriction
    of $\rho$ to $X-Y$ is the pullback of $\pi \circ \widetilde{\psi}$
    along $\Spec(L) \to \Spec(k[x])$.
    Let 
    $h(n)$ denote the number of $q$ such that $\dim_{L}H^{q}(X, \Omega^{n-q}_{X/L}(\log Y))$ is nonzero. 
    By Deligne's open
	local monodromy theorem \cite[Theorem 14.3]{katz_nilpotent_monodromy_theorem}, 
	$R^{n}\psi_*(\mathcal{O}_{\widetilde{V}},d)$ with the
	Gauss-Manin connection has nilpotence index bounded by $h(n)$. Thus, 
    $R^{n}\pi_* \widetilde{N}$ also has nilpotence index bounded above
    by $h(n)$. 

    By work of Christol (see \cite[Section 5]{christol1990bounded} or \cite[I.3.3]{andre1898gfunctions}) there is a
	horizontal map $\theta: R^{n}\pi_*\widetilde{N} \to k\dbrac{x}$ that
	sends the cohomology class
	\begin{equation*}
	e \otimes \frac{dx_1 \dots d{x_n}}{x_1\dots x_n} \mapsto f=\Delta(h). 
	\end{equation*}
    Then from Lemma \ref{l: bounding nilpotence of individual entries}
    and the previous paragraph, we see that $\Nil(f) \leq h(n)$. 
    In the general case, we see that $h(n)\leq n+1$, since any valid
    $q$ must satisfy $0 \leq q \leq n$. In the case that $h$ is rational,
    we can take $V=U$ and $\widetilde{V}=\widetilde{U}$. In particular,
    the function field of $\widetilde{V}$ is $L(x_1,\dots,x_{n})$,
    so that $X$ is rational. Thus, $H^{n}(X, \mathcal{O}_X)$ vanishes,
    which implies $h(n)\leq n$.\qedhere
       
    \end{proof}

    \begin{corollary}
        \label{c: bound on diagonal grade}
        Let $f$ be a $D$-finite series in $K\dbrac{x}$. Then $\dia(f) \geq \Nil(f)$
        and $\had(f) \geq \Nil(f)$.
    \end{corollary}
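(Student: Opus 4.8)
The plan is to read both inequalities off of Theorem \ref{t: nilpotence bound for diagonal} by exhibiting, in each case, an explicit multivariate representative of $f$ to which that theorem applies. In both cases I would first observe that there is nothing to prove unless the relevant grade is finite, and that when it is finite $f$ is a diagonal, hence a solution of a Picard--Fuchs equation; such a connection is regular singular at $x=0$, so $M_f$ is regular at $0$ and $\Nil(f)$ is defined and finite. (If $f$ is rational then $\dia(f)=\had(f)=0$ while $\Nil(f)=1$, so one should strictly speaking restrict to transcendental $f$, or adopt the convention $\Nil(f)=0$ for rational $f$; I will assume $f$ is not rational, the rational case being a matter of convention.)

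For $\dia(f)\geq\Nil(f)$: set $n=\dia(f)$ and choose a \emph{rational} $h\in K(x_0,\dots,x_n)$ with $f=\Delta_n(h)$. Then the second assertion of Theorem \ref{t: nilpotence bound for diagonal}, namely the sharper bound in the case where $h$ is rational, gives exactly $\Nil(f)\leq n=\dia(f)$.

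For $\had(f)\geq\Nil(f)$: set $k=\had(f)$ and write $f=h_1*\cdots*h_k$ with each $h_j$ algebraic over $K(x)$, say $h_j=\sum_i c_i^{(j)}x^i$. The key step is to realize this $k$-fold Hadamard product as a diagonal in $k$ variables. Put
\[ h(x_0,\dots,x_{k-1}) \coloneqq \prod_{j=0}^{k-1} h_{j+1}(x_j). \]
Each factor $h_{j+1}(x_j)$ is algebraic over $K(x_j)\subseteq K(x_0,\dots,x_{k-1})$, and products of algebraic series are algebraic, so $h\in K\cbrac{x_0,\dots,x_{k-1}}$ (it is a power series, and algebraic over the rational function field). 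The coefficient of $x_0^i\cdots x_{k-1}^i$ in $h$ is $\prod_{j=1}^k c_i^{(j)}$, so $\Delta_{k-1}(h)=\sum_i\bigl(\prod_{j=1}^k c_i^{(j)}\bigr)x^i = h_1*\cdots*h_k = f$. Applying the general case of Theorem \ref{t: nilpotence bound for diagonal} with $n=k-1$ then gives $\Nil(f)=\Nil(\Delta_{k-1}(h))\leq (k-1)+1 = k = \had(f)$.

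I do not expect a real obstacle: all the substance is packaged into Theorem \ref{t: nilpotence bound for diagonal}. The only point that must be handled with care is the bookkeeping of which clause of that theorem is used where — the sharp bound $\Nil\leq n$ for the rational diagonal-grade representative, versus the general bound $\Nil\leq n+1$ for the merely algebraic representative $\prod_j h_{j+1}(x_j)$ of the Hadamard product — together with the elementary observation that a $k$-fold Hadamard product is naturally a diagonal in $k$ rather than $k+1$ variables, so the weaker bound still yields the desired conclusion $\Nil(f)\leq k$.
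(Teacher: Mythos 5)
Your proof is correct and follows essentially the same route as the paper: the diagonal-grade bound is read off from the rational case of Theorem \ref{t: nilpotence bound for diagonal}, and the Hadamard-grade bound comes from writing $h_1*\cdots*h_k=\Delta_{k-1}\bigl(h_1(x_0)\cdots h_k(x_{k-1})\bigr)$ and applying the algebraic case of that theorem, which is exactly the paper's observation. Your side remark about rational $f$ (where $\dia(f)=\had(f)=0$ but $\Nil(f)=1$ under the stated definition) is a legitimate edge case that the paper's proof passes over silently, but it does not affect the substance of the argument.
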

    \begin{proof}
        The statement for $\dia(f)$ is immediate from Theorem \ref{t: nilpotence bound for diagonal}. The statement for $\had(f)$ follows from
        Theorem \ref{t: nilpotence bound for diagonal} using
        the following observation: for $f_0,\dots,f_{n-1} \in K\cbrac{x}$ 
        we have 
        \begin{align*}
            f_0(x)* \dots * f_{n-1}(x) &= \Delta_{n-1}(f_0(x_0)\cdot \dots \cdot f_{n-1}(x_{n-1})),
        \end{align*}
        so that the Hadamard product $f_0(x)* \dots * f_{n-1}(x)$
        is the diagonal of an element in $K\cbrac{x_0,\dots,x_{n-1}}$. 
    \end{proof}
    \begin{remark}
        We remark that Corollary \ref{c: bound on diagonal grade}
        proves `half' of the conjecture described in \cite[\S 1.2]{hassani2025diagonals_conjecture}. In particular,
        the highest power of $\log(x)$ occurring in the solution space
        of $L_f$ around $x=0$ equals the nilpotence index, which by
        Corollary \ref{c: bound on diagonal grade} gives a lower bound on
        the diagonal grade. The other direction of this conjecture is
        likely to be much more difficult. It is also worth pointing out
        the relation with \cite[Conjecture 1.3.6]{Lam-Litt-Algebraic_solutions_to_diffeqs}, when restricted to
        connections that admit at least one globally bounded solution.
        This conjecture states that finite local monodromy implies
        algebraicity.
    \end{remark}

\section{Applications} \label{s: applications}
We now apply Corollary \ref{c: bound on diagonal grade} to study 
the diagonal and Hadamard grade of some well known globally bounded functions.
\subsection{Hypergeometric functions}

\paragraph{Basic definitions} Let $\alpha = (\alpha_1,\dots, \alpha_n) \in \mathbb{Q}^n, \beta = (\beta_1,\dots, \beta_{n}) \in \mathbb{Q}^{n}$, with $\beta_n=1$ and $\alpha_i \neq \beta_j$ if $i \neq j$. Let $(x)_i = x(x+1)\dots (x+i-1)$ for any number $x$ and $i\in \mathbb{N}$. We have the usual hypergeometric functions of a single variable:
\[
\prescript{}{n}F_{n-1}\left(\alpha;\beta \mid x\right) = \sum_{i=0}^\infty\left( \dfrac{(\alpha_1)_i\dots (\alpha_n)_i}{(\beta_1)_i\dots (\beta_{n-1})_i (1)_i}\right) x^i.
\]
We observe directly that 
\[
\prescript{}{n}F_{n-1}\left(\alpha;\beta \mid x\right) = \prescript{}{2}F_1(\alpha_1,1;\beta_1 \mid x)* \dots  * \prescript{}{2}F_1(\alpha_n,1;\beta_n,1 \mid x).
\]
In the specialized case $\beta=(1,\dots,1)$ we have 
    \begin{equation}\label{eq: hadamard product when betas are one}
    \prescript{}{n}F_{n-1}(\alpha;1,\dots, 1\mid  x) = \prescript{}{1}F_0(\alpha_1;1 \mid x)* \dots  * \prescript{}{1}F_0(\alpha_n;1 \mid x).
    \end{equation}
     The functions $\prescript{}{1}F_0(\alpha_i; 1\mid x) = (1-x)^{-\alpha_i}$ are algebraic if $\alpha_i\in \mathbb{Q}$. Therefore, the Hadamard grade of $\prescript{}{n}F_{n-1}(\alpha;1,\dots, 1\mid x)$ is at most $n$. 

Let $\theta = x \frac{d}{dx}$. Then $\prescript{}{n}F_{n-1}\left(\alpha;\beta \mid x\right)$ is a solution to the differential operator
\[
L({\alpha}; {\beta}) = \theta\prod_{i=1}^{n-1}(\theta + \beta_i - 1) + x \prod_{i=1}^n (\theta +  \alpha_i).
\]
Furthermore, the singularities of $L(\alpha;\beta)$ occur at $x=0,1,\infty$ and are regular.
We define $V({ \alpha};{\beta})$ to be the space of solutions of $L(\alpha;\beta)$ at a
point $x_0 \neq 0,1,\infty$. For $*=0,1,\infty$ we let $T_*$ denote the monodromy action
on $V({\alpha};{\beta})$ by analytic continuation around a simple counterclockwise loop containing $*$.

\paragraph{The nonresonant case} When $\alpha_i - \beta_j \in \mathbb{Z}$ for a pair of $i,j$ we say that $({\alpha};{\beta})$ is resonant. In the nonresonant case, the monodromy representation of $V({ \alpha};{\beta})$ has been computed by Levelt \cite{levelt1961hypergeometric}, and is irreducible \cite[Proposition 3.3]{beukers1989monodromy}.

According to Levelt, if we let $a_i = \exp(2\pi {\tt i}\alpha_i), b_i = \exp(2\pi {\tt i} \beta_i)$ and $a(t) = \prod (t-a_i), b(t) = \prod (t-b_i)$, then $T_\infty = \mathcal{C}_{a(t)}$ and $T_0 = \mathcal{C}_{b(t)}$ where $\mathcal{C}_{f(t)}$ denotes the companion matrix (e.g. \cite[pp. 475]{dummit2004abstract}) of the polynomial $f(t)$. Consequently, $T_\infty$ has a single Jordan block corresponding to each distinct root of $a(t)$ whose dimension is the multiplicity of that root. Similarly, $T_0$ has a unique Jordan block for each distinct root of $b(t)$. Therefore we have the following statement.
\begin{proposition}\label{p: nonresonant nilpotence}
    Suppose $({\alpha},{\beta})$ is nonresonant. The hypergeometric differential equation $L({\alpha},{\beta})$ has nilpotence index $n$ if ${\beta} \in \mathbb{Z}^{n-1}$. Moreover, if $a_1= \dots = a_n = m/k$ with $m,k \in \mathbb{Z}$ then $T_\infty^k$ is maximally unipotent.
\end{proposition}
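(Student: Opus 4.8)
The plan is to read off the Jordan structure of $T_0$ and $T_\infty$ from Levelt's description and then deduce the nilpotence index, which (by the regular singular formal Fuchsian theory invoked in the definition of $\Nil$) at a singular point $*$ equals the size of the largest Jordan block of the \emph{unipotent part} of $T_*$, maximized over $* \in \{0,1,\infty\}$. So the whole statement reduces to a computation of eigenvalues and block sizes at each of the three singularities.

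First I would treat $x=0$. Here $T_0 = \mathcal{C}_{b(t)}$ with $b(t) = \prod_{i=1}^n (t - b_i)$ and $b_i = \exp(2\pi{\tt i}\beta_i)$. When $\beta \in \Z^{n-1}$ (recall $\beta_n = 1$ always), every $b_i = 1$, so $b(t) = (t-1)^n$ and the companion matrix $T_0$ is a single unipotent Jordan block of size $n$; hence $(T_0 - 1)^n = 0$ but $(T_0-1)^{n-1} \neq 0$, giving $\Nil(M) \geq n$. For the reverse inequality I would argue that no singularity can contribute a larger unipotent block: the module has rank $n$, so every Jordan block has size $\le n$, and therefore $\Nil(M) \le n$ automatically. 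Combining, $\Nil(L(\alpha,\beta)) = n$ in the case $\beta \in \Z^{n-1}$. (One should double-check that the definition of $\Nil$ via the lattice at $x=0$ indeed sees the full Jordan block of $T_0$ — this is exactly the content of formal Fuchsian theory: the unipotent part $U$ of the exponent-matrix monodromy is conjugate to the unipotent part of the local monodromy $T_0$.)

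For the ``moreover'' clause, suppose $a_1 = \dots = a_n = m/k$, by which the paper means $a_i = \exp(2\pi{\tt i}\,m/k)$ is a single value, a primitive-ish $k$-th root of unity $\zeta := e^{2\pi{\tt i} m/k}$. Then $a(t) = (t-\zeta)^n$, so $T_\infty = \mathcal{C}_{(t-\zeta)^n}$ is a single Jordan block of size $n$ with eigenvalue $\zeta$. Raising to the $k$-th power: $T_\infty^k$ has eigenvalue $\zeta^k = 1$, and since a single Jordan block $J_n(\zeta)$ with $\zeta^k=1$ satisfies $J_n(\zeta)^k = \zeta^k(I + N)^k$ where $N$ is the nilpotent part and, in characteristic zero, $(I+N)^k = I + kN + \binom{k}{2}N^2 + \cdots$ is again a single unipotent Jordan block of size $n$ (because $kN \ne 0$ when $N \ne 0$), we conclude $T_\infty^k$ is maximally unipotent, i.e. a single unipotent Jordan block of full size $n$.

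The main obstacle I anticipate is purely bookkeeping rather than conceptual: matching Levelt's normalization of $T_0, T_\infty$ (and the induced $T_1 = T_\infty^{-1}T_0^{-1}$ or similar) with the sign and orientation conventions of the paper's $T_*$, and making sure that ``nilpotence index'' as defined through the lattice $M_0/xM_0$ at $x=0$ genuinely records the maximum over \emph{all three} singular points and not just $x=0$ — this is where one must invoke that $\Nil(M)$ is a global invariant of the differential module (equivalently, that the largest power of $\log$ appearing in a local solution basis is the same whether computed at $0$, $1$, or $\infty$ after accounting for the rank bound). Once those conventions are pinned down, each of the three claims is a one-line Jordan-block computation as above.
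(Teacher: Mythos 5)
Your proposal is correct and follows essentially the same route as the paper: Levelt's companion-matrix description of $T_0$ and $T_\infty$, from which $\beta\in\Z^{n-1}$ forces $b(t)=(t-1)^n$ and hence a single unipotent Jordan block of size $n$ at $0$, while $a(t)=(t-\zeta)^n$ makes $T_\infty^k$ a full unipotent block (the paper states the proposition as an immediate consequence of exactly this Jordan-block reading). One small simplification: since the paper's $\Nil$ is defined via the lattice at $x=0$ alone, your maximization over all three singular points is not needed, and note the harmless slip that $J_n(\zeta)^k=\zeta^k(I+\zeta^{-1}N)^k$ rather than $\zeta^k(I+N)^k$, which does not affect the conclusion.
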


We need the following lemma comparing diagonal and Hadamard grades.
 \begin{lemma}
     \label{l: relate hg and dg}
     Let $f \in K\dbrac{x}$ be a $D$-finite series. The following
     are equivalent:
     \begin{enumerate}[label=(\Roman*)]
         \item There exists an 
         $g \in K\cbrac{x_0,\dots,x_n}$ with $f=\Delta_n(g)$.
         \item There is a rational function
         $h \in K\brac{x_0,\dots,x_{n+1}}_{(x_0,\dots,x_{n+1})}$ with $f = \Delta_{n+1}(h)$.
     \end{enumerate}
     In particular, we have
     $\dia(f) \leq \had(f)$.
 \end{lemma}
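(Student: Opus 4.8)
The plan is to move between algebraic power series in $m$ variables and rational power series regular at the origin in $m+1$ variables via a partial diagonal that merges two of the variables, and then to read off the inequality from the Hadamard‑product identity recorded in the proof of Corollary~\ref{c: bound on diagonal grade}. For a power series $h=\sum_{\mathbf i}c_{\mathbf i}\mathbf x^{\mathbf i}$ in $x_0,\dots,x_m$, let $Dh\in K\dbrac{x_0,\dots,x_{m-1}}$ be the series whose coefficient of $x_0^{i_0}\cdots x_{m-1}^{i_{m-1}}$ is $c_{i_0,\dots,i_{m-2},\,i_{m-1},\,i_{m-1}}$ (``merge the last two variables''). Unwinding the definitions gives the compatibility $\Delta_{m-1}(Dh)=\Delta_m(h)$, since both sides collect the coefficients of $h$ all of whose $m+1$ indices coincide. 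Thus (II)$\Rightarrow$(I) reduces to the statement that $Dh$ is algebraic whenever $h$ is rational and regular at the origin: then $g\coloneqq Dh\in K\cbrac{x_0,\dots,x_n}$ and $\Delta_n(g)=\Delta_{n+1}(h)=f$. And (I)$\Rightarrow$(II) reduces to the statement that every $g\in K\cbrac{x_0,\dots,x_n}$ equals $Dh$ for some rational $h$ regular at the origin: then $\Delta_{n+1}(h)=\Delta_n(g)=f$. These are the two directions of Furstenberg's theorem \cite{Furstenberg}, the second in its several‑variable form.

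For the first (easy) direction I would write $D$ as a one‑variable contour integral,
\[
(Dh)(x_0,\dots,x_{m-1})=\frac{1}{2\pi{\tt i}}\oint_{|t|=\epsilon}h\!\left(x_0,\dots,x_{m-2},\tfrac{x_{m-1}}{t},\,t\right)\frac{dt}{t},
\]
valid for $|x_0|,\dots,|x_{m-1}|$ small. After clearing the denominators introduced by the substitution, the integrand is a rational function of $t$ whose coefficients lie in $K(x_0,\dots,x_{m-1})$, and the integral equals the sum of its residues at the poles tending to $0$ with $\mathbf x$. Each such pole is a root of a polynomial over $K[x_0,\dots,x_{m-1}]$, hence algebraic, and each residue is a rational expression in that pole and in $x_0,\dots,x_{m-1}$; so $Dh$ is algebraic over $K(x_0,\dots,x_{m-1})$, and it is visibly a power series when $h$ is regular at the origin. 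This proves (II)$\Rightarrow$(I).

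For the other direction one runs Furstenberg's residue construction with $x_0,\dots,x_{n-1}$ as inert parameters. The prototype is: after subtracting the constant $g(\mathbf 0)\in K$ (which alters $h$ only by an additive constant and preserves $\Delta_{n+1}(h)=\Delta_n(g)$) one chooses a polynomial $P(x_0,\dots,x_n,Y)$ with $P(\mathbf x,g)=0$ and $\partial_Y P(\mathbf 0,0)\neq 0$ and sets
\[
h(x_0,\dots,x_{n+1})\coloneqq x_{n+1}^{2}\,\frac{(\partial_Y P)(x_0,\dots,x_{n-1},\,x_nx_{n+1},\,x_{n+1})}{P(x_0,\dots,x_{n-1},\,x_nx_{n+1},\,x_{n+1})},
\]
which is rational, regular at the origin (the factor $x_{n+1}$ forced in the denominator by $P(\mathbf 0,0)=0$ cancels), and satisfies $Dh=g$ by a one‑variable residue computation. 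I expect the main obstacle to be exactly the normalization $\partial_Y P(\mathbf 0,0)\neq 0$, i.e. the claim that $g(\mathbf 0)$ can be taken to be a simple root of a defining polynomial: this fails for the minimal polynomial of $g$ — for instance $g=x_0\sqrt{1+x_1}$ has minimal polynomial $Y^2-x_0^2(1+x_1)$, so $g(\mathbf 0)=0$ is a double root, and every polynomial vanishing at $g$ is a multiple of the minimal one — so the presentation of $g$ must first be improved, e.g. by a Weierstrass‑type factorization over $K\dbrac{x_0,\dots,x_n}$, before the residue formula applies. I would either carry out this normalization carefully or simply invoke the several‑variable Furstenberg theorem from \cite{Furstenberg}.

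Finally, the inequality $\dia(f)\le\had(f)$. If $\had(f)=\infty$ there is nothing to prove, and if $\had(f)=0$ then $f$ is rational and $\dia(f)=0$. Otherwise put $k=\had(f)$ and write $f=h_1*\cdots*h_k$ with each $h_i\in K\cbrac{x}$; by the identity in the proof of Corollary~\ref{c: bound on diagonal grade}, $f=\Delta_{k-1}\!\big(h_1(x_0)\cdots h_k(x_{k-1})\big)$, and $h_1(x_0)\cdots h_k(x_{k-1})\in K\cbrac{x_0,\dots,x_{k-1}}$, so (I) holds with $n=k-1$. The equivalence then yields (II): $f=\Delta_k(h)$ for a rational $h$ regular at the origin, whence $\dia(f)\le k=\had(f)$.
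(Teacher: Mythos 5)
Your reduction is set up correctly: the merge-two-variables operator $D$ satisfies $\Delta_n\circ D=\Delta_{n+1}$ on the nose, your parametrized residue argument for (II)$\Rightarrow$(I) (two-variable Furstenberg over the field $K(x_0,\dots,x_{n-1})$) is sound, and the deduction of $\dia(f)\le\had(f)$ at the end is fine. But the direction (I)$\Rightarrow$(II) has a genuine gap, and it is exactly at the point you flag. The statement you need — every $g\in K\cbrac{x_0,\dots,x_n}$ is $Dh$ for some rational $h$ regular at the origin — is not in Furstenberg's paper: his characteristic-zero result is the one-variable case (plus the two-variable ``diagonal of rational is algebraic'' direction), and his several-variable results are in positive characteristic. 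The multivariate rationalization theorem you want is precisely Denef--Lipshitz \cite[Theorem 6.2, Remark 6.4]{Denef-Lipshitz-alg_series_diagonals}, which is what the paper cites; its proof is substantially more than a normalization trick. Your proposed fix (``Weierstrass-type factorization'') does not resolve the obstruction: in your own example $Y^2-x_0^2(1+x_1)$ the minimal polynomial is already a Weierstrass polynomial in $Y$, the double root at the origin persists, and the one-variable remedies (truncate-and-divide, or a change of coordinates to separate branches) are unavailable because $D$ and $\Delta_n$ are tied to the given coordinates, so no linear change of the $x_i$ is permitted. Denef--Lipshitz get around this by a different mechanism (representing algebraic series via Hensel/\'etale systems with invertible Jacobian and an induction that reduces to simple roots), which your sketch neither reproduces nor cites.

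With the citation repaired — i.e.\ invoking Denef--Lipshitz for (I)$\Rightarrow$(II) instead of ``several-variable Furstenberg'' — your argument becomes correct, and it is in fact a mild streamlining of the paper's proof: the paper uses the operator $\mathscr{D}$ (last exponent equal to the sum of the others), for which the compatibility with $\Delta$ only holds after the substitution $x_{n+1}\mapsto x_{n+1}^{n+1}$ and an averaging over $(n+1)$-st roots of unity, whereas your $D$ satisfies the compatibility directly. So the skeleton is the same (a diagonal-operator compatibility plus the Denef--Lipshitz surjectivity theorem); the difference is only in which partial-diagonal operator is used and in the fact that the paper treats the hard surjectivity as a citation rather than attempting to prove it.
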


\begin{proof}
    Define a map $\mathscr{D}:K\dbrac{x_0,\dots,x_{n+1}} \to K\dbrac{x_0,\dots,x_{n}}$ by
    \begin{equation*}
        \mathscr{D}\left (\sum_{i_0,\dots,i_{n+1}\geq 0} a_{i_0}\dots _{i_{n+1}}x_0^{i_0}\dots x_{n+1}^{i_{n+1}} \right ) = \sum_{\substack{i_0,\dots,i_{n+1}\geq 0 \\ i_0 +\dots+ i_n = i_{n+1}}} a_{i_0,\dots,i_{n+1}} x_{0}^{i_0}\dots x_{n}^{i_n}.
    \end{equation*}
    Note that
    \begin{equation}
        \label{eq: D and diagonal}
        \Delta_n\left (\mathscr{D}\left (\sum_{i_0,\dots,i_{n+1}\geq 0} a_{i_0}\dots _{i_{n+1}}x_0^{i_0}\dots x_{n+1}^{i_{n+1}} \right )\right) = \sum_{i\geq 0} a_{i,\dots,i,(n+1) i} x^i
    \end{equation}
    By \cite[Theorem 6.2, Remark 6.4]{Denef-Lipshitz-alg_series_diagonals} we know that $\mathscr{D}$ maps $K[x_0,\dots,x_{n+1}]_{(x_0,\dots,x_{n+1})}$ surjectively onto
    $K\{x_0,\dots,x_n \}$.  
    First, assume  $(II)$ holds, so let $h$ is a rational function with $f = \Delta_{n+1}(h)$. Then
    from \eqref{eq: D and diagonal} we have \[\Delta_n(\mathscr{D}(h(x_0,\dots,x_n,x_{n+1}^{n+1}))= f.\]
    As $\mathscr{D}(h(x_0,\dots,x_n,x_{n+1}^{n+1})))$ is algebraic, this implies $(I)$. Next, assume $(I)$ holds, so that
    there is an algebraic function $g$ with $f = \Delta_n(g)$. 
    We know there exists $r \in K\brac{x_0,\dots,x_{n+1}}_{(x_0,
    \dots,x_{n+1})}$ with $\mathscr{D}(r)=g$. 
    Let $s \in K\brac{x_0,\dots,x_{n+1}}_{(x_0,\dots,x_{n+1})}$
    be the rational function satisfying
    \begin{align*}
        s(x_0,\dots,x_{n},x_{n+1}^{n+1}) &= \frac{1}{n+1}\sum_{\zeta^{n+1}=1} r(x_0,\dots,x_n,\zeta x_{n+1}).
    \end{align*}
    By our definition of $s$ and \eqref{eq: D and diagonal} we compute:
    \[ \Delta_{n+1}(s) = \Delta_n(\mathscr{D}(r))=\Delta_n(g)=f,\]
    which implies $(II)$.    
\end{proof}
 
\begin{theorem}\label{t:all beta = 1}
Suppose $({\alpha}, {\beta})$ is nonresonant and ${\beta} = (1,\dots, 1)$ or 
if $\alpha_1=\dots = \alpha_n$ then:
\[\dia(\prescript{}{n}F_{n-1}({\alpha}; {\beta}\mid x)) = \had(\prescript{}{n}F_{n-1}({\alpha}; {\beta}\mid x)) = n.\]
\end{theorem}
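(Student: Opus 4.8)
The plan is a squeeze: I will show
\[
n \;\le\; \Nil(f) \;\le\; \dia(f) \;\le\; \had(f) \;\le\; n
\]
for $f = \prescript{}{n}F_{n-1}(\alpha;\beta\mid x)$, where the middle inequality is Lemma \ref{l: relate hg and dg} and $\Nil(f)\le\dia(f)$, $\Nil(f)\le\had(f)$ are Corollary \ref{c: bound on diagonal grade}. Thus everything reduces to the two outer bounds, namely a nilpotence-index computation giving $\ge n$ and a Hadamard factorization giving $\had(f)\le n$.

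First I would identify the differential module of $f$. Since $(\alpha,\beta)$ is nonresonant, $V(\alpha;\beta)$ is an irreducible monodromy representation by \cite[Proposition 3.3]{beukers1989monodromy}, so $M_{L(\alpha;\beta)}$ is an irreducible differential module. As $f$ is a nonzero solution of $L(\alpha;\beta)$, the minimal operator $L_f$ right-divides $L(\alpha;\beta)$; a proper right divisor would exhibit a nonzero proper quotient of the irreducible module $M_{L(\alpha;\beta)}$, so $L_f=L(\alpha;\beta)$, whence $M_f = M_{L(\alpha;\beta)}$ and $\Nil(f) = \Nil(M_{L(\alpha;\beta)})$. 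Thus the nilpotence exponent of $f$ is governed by the local monodromy of the hypergeometric equation, which is exactly the content of Proposition \ref{p: nonresonant nilpotence}.

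For the lower bound I would split into the two cases. If $\beta=(1,\dots,1)$, then $\beta\in\mathbb Z^{n-1}$ and Proposition \ref{p: nonresonant nilpotence} gives $\Nil(M_{L(\alpha;\beta)})=n$ directly, so $\dia(f),\had(f)\ge\Nil(f)=n$ by Corollary \ref{c: bound on diagonal grade}. If instead $\alpha_1=\dots=\alpha_n$, Proposition \ref{p: nonresonant nilpotence} only tells us that $T_\infty$ is a scalar times a single unipotent Jordan block of size $n$, i.e. the nilpotence index of $M_{L(\alpha;\beta)}$ computed at $x=\infty$ equals $n$; the nilpotence index at $x=0$ may be smaller. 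To use the information at infinity I would observe that the proof of Theorem \ref{t: nilpotence bound for diagonal} — which goes through Deligne's open local monodromy theorem applied to the smooth proper family $\rho:X\to\Spec(L)$ — in fact bounds the nilpotence index of $R^{n}\pi_*\widetilde N$, and hence of $M_f$ via the evident variant of Lemma \ref{l: bounding nilpotence of individual entries} at an arbitrary point, by $h(n)\le n+1$ (respectively $\le n$ for rational $h$) at \emph{every} singular point of the base, not just at $x=0$. Consequently a nilpotence index of $n$ at $\infty$ still obstructs $\dia(f)\le n-1$ and $\had(f)\le n-1$, so again $\dia(f),\had(f)\ge n$. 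This upgrade of Corollary \ref{c: bound on diagonal grade} to the point at infinity is routine but is one place where care is needed.

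For the upper bound I would factor $f$ as a Hadamard product of $n$ algebraic series, giving $\had(f)\le n$ and then $\dia(f)\le\had(f)\le n$ by Lemma \ref{l: relate hg and dg}. When $\beta=(1,\dots,1)$ this is exactly \eqref{eq: hadamard product when betas are one}: $f=\prescript{}{1}F_0(\alpha_1;1\mid x)*\cdots*\prescript{}{1}F_0(\alpha_n;1\mid x)$, and each factor $\prescript{}{1}F_0(\alpha_i;1\mid x)=(1-x)^{-\alpha_i}$ is algebraic since $\alpha_i\in\mathbb Q$. The case $\alpha_1=\dots=\alpha_n$ is the step I expect to be the main obstacle: the naive decomposition $f=\prescript{}{2}F_1(\alpha,1;\beta_1\mid x)*\cdots*\prescript{}{2}F_1(\alpha,1;\beta_{n-1}\mid x)*(1-x)^{-\alpha}$ has $n$ factors but the first $n-1$ are in general transcendental (and need not even be globally bounded), so it does not yield $\had(f)\le n$. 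Instead one must use the iterated Euler integral representation of $\prescript{}{n}F_{n-1}(\alpha,\dots,\alpha;\beta\mid x)$ together with the arithmetic (interlacing) constraints forced by global boundedness to peel off $n$ algebraic Hadamard factors, equivalently to realize $f$ as the diagonal of a rational function in $n+1$ variables. Once both outer bounds are established the displayed chain collapses and $\dia(f)=\had(f)=n$; note that the flagship example $\prescript{}{n}F_{n-1}(\tfrac12,\dots,\tfrac12;1,\dots,1\mid x)$ is covered by the first case, where the argument is unconditional.
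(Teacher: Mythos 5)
Your treatment of the case $\beta=(1,\dots,1)$ is exactly the paper's proof: the lower bound $\dia,\had\geq n$ via Proposition \ref{p: nonresonant nilpotence} plus Corollary \ref{c: bound on diagonal grade}, the upper bound $\had\leq n$ via the factorization \eqref{eq: hadamard product when betas are one} into the algebraic factors $(1-x)^{-\alpha_i}$, and $\dia\leq\had$ via Lemma \ref{l: relate hg and dg}. Your explicit observation that irreducibility of the nonresonant system forces $L_f=L(\alpha;\beta)$, so that $\Nil(f)$ really is the nilpotence index of the hypergeometric module, is left implicit in the paper and is the correct justification for that step.

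Where you diverge is the clause ``$\alpha_1=\dots=\alpha_n$''. The paper's proof simply runs the same two lines again; in particular its upper bound is literally \eqref{eq: hadamard product when betas are one}, which only applies when $\beta=(1,\dots,1)$, so the intended hypothesis there still has all $\beta_j=1$, and under that reading your argument is already complete. Reading the clause with general $\beta$, as you do, raises two points. (a) Your proposed upgrade of Corollary \ref{c: bound on diagonal grade} to the nilpotence index at $\infty$ (Deligne's theorem in the proof of Theorem \ref{t: nilpotence bound for diagonal} does control local monodromy at every boundary point, not just $x=0$) is reasonable and is indeed the only way the second clause of Proposition \ref{p: nonresonant nilpotence}, about $T_\infty^k$, could be used; the paper never spells this out. (b) Your upper bound in that generality is only a plan: ``peel off $n$ algebraic Hadamard factors via Euler integrals and interlacing'' is not an argument, and it cannot work in general, since for generic $\beta$ with $\alpha_1=\dots=\alpha_n$ the series $\prescript{}{n}F_{n-1}(\alpha;\beta\mid x)$ is not globally bounded and hence has infinite Hadamard and diagonal grade. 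So under the paper's reading your proof matches the paper's and is correct; under your broader reading, step (b) is a genuine gap, and in fact the broader statement would need additional hypotheses ensuring global boundedness. In particular the flagship case $\prescript{}{n}F_{n-1}(\tfrac12,\dots,\tfrac12;1,\dots,1\mid x)$ is fully and unconditionally handled by your argument, exactly as in the paper.
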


\begin{proof}
    By Proposition \ref{p: nonresonant nilpotence} and Corollary
    \ref{c: bound on diagonal grade} we have 
    \[\dia(\prescript{}{n}F_{n-1}({\alpha},;{\beta}\mid x)) , \had(\prescript{}{n}F_{n-1}({\alpha}; {\beta}\mid x)) \geq n.\]
    Then from \eqref{eq: hadamard product when betas are one} and the fact that
    $\prescript{}{1}F_0(\alpha_i; 1\mid x) = (1-x)^{-\alpha_i}$ 
    is algebraic we see
    $\had(\prescript{}{n}F_{n-1}({\alpha}; {\beta}\mid x)) = n$. The statement
    for diagonal grade follows from Lemma \ref{l: relate hg and dg}.
\end{proof}

\begin{corollary}
    \label{c: all had and dia grades exist}
    For every $k\geq 0$ there exists a function $f$ with
    $\dia(f)=k$ and $\had(f)=k$. In particular, $\mathcal{D}_k \subsetneq \mathcal{D}_{k+1}$ and $\mathcal{H}_k \subsetneq \mathcal{H}_{k+1}$.
\end{corollary}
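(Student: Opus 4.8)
The plan is to obtain this as a direct packaging of Theorem \ref{t:all beta = 1}, by producing an explicit family of series realizing each grade. The case $k=0$ is immediate: any nonzero rational function $f$, for instance $f=1$, has $\dia(f)=0$ (since $\mathcal{D}_0$ is exactly the space of rational functions) and $\had(f)=0$ by definition of the Hadamard grade, so it witnesses $\dia(f)=\had(f)=0$.

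For $k\geq 1$ I would take $f_k := \prescript{}{k}F_{k-1}(\tfrac{1}{2},\dots,\tfrac{1}{2};1,\dots,1 \mid x)$, i.e.\ the hypergeometric function with $\alpha=(\tfrac{1}{2},\dots,\tfrac{1}{2})$ and $\beta=(1,\dots,1)$. These parameters satisfy the standing hypothesis $\alpha_i\neq\beta_j$ (as $\tfrac{1}{2}\neq 1$) and are nonresonant, since $\alpha_i-\beta_j=-\tfrac{1}{2}\notin\mathbb{Z}$ for every pair $i,j$. Because $\beta=(1,\dots,1)$ (and, redundantly, $\alpha_1=\dots=\alpha_k$), Theorem \ref{t:all beta = 1} applies verbatim and yields $\dia(f_k)=\had(f_k)=k$. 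This establishes the first assertion of the corollary for all $k\geq 0$.

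For the strict inclusions, one first checks that both filtrations are non-decreasing in $k$: for $\mathcal{D}$ this is immediate from the definition, and for $\mathcal{H}$ one notes that $\tfrac{1}{1-x}=\sum_{n\geq 0}x^n$ is algebraic and is the identity element for the Hadamard product, so any expression $f=h_1*\cdots*h_m$ with $m\leq k$ can be padded to length $k+1$, giving $\mathcal{H}_k\subseteq\mathcal{H}_{k+1}$. The function $f_{k+1}$ from the previous paragraph then lies in $\mathcal{D}_{k+1}\setminus\mathcal{D}_k$ and in $\mathcal{H}_{k+1}\setminus\mathcal{H}_k$, because both of its grades equal $k+1>k$; hence $\mathcal{D}_k\subsetneq\mathcal{D}_{k+1}$ and $\mathcal{H}_k\subsetneq\mathcal{H}_{k+1}$ for all $k\geq 0$. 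There is no genuine difficulty in this corollary --- it is a repackaging of Theorem \ref{t:all beta = 1} --- and the only points needing a moment's attention are the trivial $k=0$ base case and the verification that the chosen parameters meet every hypothesis of Theorem \ref{t:all beta = 1}; any nonresonant $\alpha\in(\mathbb{Q}\setminus\mathbb{Z})^k$ with $\beta=(1,\dots,1)$, or the constant vector from the ``moreover'' clause of Proposition \ref{p: nonresonant nilpotence}, would serve equally well.
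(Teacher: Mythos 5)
Your proposal is correct and matches the paper's (implicit) argument: the corollary is exactly the specialization of Theorem \ref{t:all beta = 1} to $\alpha=(\tfrac{1}{2},\dots,\tfrac{1}{2})$, $\beta=(1,\dots,1)$, together with the trivial $k=0$ case and the (definitionally immediate) monotonicity of the filtrations. Your extra padding argument with $\tfrac{1}{1-x}$ for $\mathcal{H}_k\subseteq\mathcal{H}_{k+1}$ is harmless but unnecessary, since $\mathcal{H}_k$ is defined by the condition $\had(f)\leq k$.
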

\begin{remark}
    Rivoal and Roques \cite{rivoal2014hadamard} show that Theorem 
    \ref{t:all beta = 1} holds without the nonresonant condition, if you assume the Lang--Rohrlich conjecture.
\end{remark}

\paragraph{The general case} We now consider general 
hypergeometric functions, which includes the resonant case where
$\alpha_i - \beta_j \in \Z$ for some $i$ and $j$. It is known that the differential equation $L({\alpha};{\beta})$, and hence its monodromy representation, is reducible.

\begin{lemma}\label{l: factorization of hypergeometric odes}
	For any $\alpha,\beta$, the local system of solutions of $L(\alpha;\beta)$ is isomorphic to that of
	\begin{equation}\label{e:factored-ode}
		\left(\prod_{i=1}^a(\theta + \gamma_j)\right)L(\tilde{ \alpha};\tilde{ \beta})\left(\prod_{i=1}^b (\theta + \delta_i)\right).
		\end{equation}
		where $\tilde{\alpha},\tilde{\beta}$ are maximal subsets of $\alpha$ and $\beta$ respectively so that $(\tilde{\alpha},\tilde{\beta})$ is nonresonant. If $\alpha_i - \beta_j \notin \mathbb{N}$ for all $i,j$ then $a = 0$. If $\beta_j - \alpha_i \notin \mathbb{N}$ for all $i,j$ then $b=0$.
\end{lemma}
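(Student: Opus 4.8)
The natural strategy is to reduce everything to two elementary factorizations of $L(\alpha;\beta)$, using the contiguity (shift) relations of the hypergeometric operator to slide the parameters into position. The whole engine is the identity $x\,g(\theta)=g(\theta+1)\,x$ for a polynomial $g$. Writing $L(\alpha;\beta)=\prod_{i=1}^{n}(\theta+\beta_i-1)+x\prod_{i=1}^{n}(\theta+\alpha_i)$ (with $\beta_n=1$), one checks directly the two \emph{base factorizations}: if $\alpha_i=\beta_j$ then $L(\alpha;\beta)=(\theta+\alpha_i-1)\,L(\tilde\alpha;\tilde\beta)$, and if $\alpha_i=\beta_j-1$ then $L(\alpha;\beta)=L(\tilde\alpha;\tilde\beta)\,(\theta+\alpha_i)$, where $\tilde\alpha=\alpha\setminus\{\alpha_i\}$, $\tilde\beta=\beta\setminus\{\beta_j\}$ — each verified by commuting the $x$ past the factor $\theta+\beta_j$. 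The first produces a first-order factor on the left, the second on the right; this is exactly the source of the $\N$-dichotomy in the statement, since a resonant pair with $\alpha_i\geq\beta_j$ will give a left factor and one with $\alpha_i<\beta_j$ a right factor.

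To reach these base cases from a general resonant pair I would use the shift identities, again immediate from $x\,g(\theta)=g(\theta+1)x$: namely $(\theta+\alpha_i)\,L(\alpha;\beta)=L(\alpha';\beta)\,(\theta+\alpha_i)$ and $L(\alpha;\beta)\,(\theta+\alpha_i-1)=(\theta+\alpha_i-1)\,L(\alpha'';\beta)$, where $\alpha'$ (resp. $\alpha''$) is $\alpha$ with $\alpha_i$ increased (resp. decreased) by $1$. Post-composing solutions with $\theta+\alpha_i$, resp. $\theta+\alpha_i-1$, is a morphism between the rank-$n$ local systems of solutions of these two operators; since they have equal rank it is an isomorphism precisely when its kernel — the line spanned by $x^{-\alpha_i}$, resp. $x^{1-\alpha_i}$ — is not itself a solution of the source operator, and a one-line computation shows this fails only when some $\beta_k$ equals $\alpha_i+1$, resp. $\alpha_i$. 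So a parameter $\alpha_i$ may be slid by integer steps up or down, preserving the isomorphism class of the local system, as long as its running value never lands on a $\beta_k$ when moving down or on a $\beta_k-1$ when moving up.

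With this in hand the proof runs as follows. Resonances occur only among parameters in a common residue class of $\Z$, so fix such a class and process its $\alpha$-values in decreasing order: for the current value $a$, either slide it down to the largest remaining $\beta$-value $b\leq a$ of the class (which is collision-free, since the class contains no $\beta$ strictly between $b$ and $a$) and peel off the left factor $(\theta+b-1)$ by the first base factorization, or — if every remaining $\beta$ of the class exceeds $a$ — slide $a$ up to $b_{\min}-1$, where $b_{\min}$ is the smallest remaining $\beta$ of the class (again collision-free), and peel off the right factor $(\theta+b_{\min}-1)$ by the second; in each case one deletes $a$ and the matched $\beta$ before continuing. Iterating over all residue classes removes one $\alpha$ and one $\beta$ for every peeled factor, the left factors coming from the pairs with $\alpha_i\geq\beta_j$ and the right ones from the pairs with $\alpha_i<\beta_j$; what remains is a maximal nonresonant sub-pair $(\tilde\alpha,\tilde\beta)$, and the accumulated relations give a factorization of the asserted shape.

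The routine part here is the algebra (the two base identities, the shift identities, and the isomorphism criterion); the part that needs care — and what I expect to be the real obstacle — is the bookkeeping of the reduction. One must choose the matching and the order of operations so that every individual slide is a genuine isomorphism (the naive ``sort both lists and pair them up'' choice can force a parameter across a $\beta$-value), one must check that deleting each matched $\beta$ before continuing keeps all later slides collision-free, and — more subtly — one must verify that the iterated peeling is consistent: given a local-system isomorphism between two cores $M$ and $M'$ it does not automatically follow that $(\theta+c)M$ and $(\theta+c)M'$ have isomorphic local systems, so one should keep track of the explicit operator factorizations throughout and, where this lift degenerates, pin down the relevant rank-one subquotients through their monodromy. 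The same identities also handle repeated parameters among the $\alpha$'s or among the $\beta$'s, which contribute Jordan blocks but otherwise change nothing.
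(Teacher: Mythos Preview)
Your approach is essentially the same as the paper's: both rest on the two base factorizations (a left first-order factor when $\alpha_i=\beta_j$, a right one when $\alpha_i=\beta_j-1$) together with the contiguity relations that shift a parameter by integers while preserving the isomorphism class of the local system, exactly as in Beukers--Heckman \cite[Proposition 2.7]{beukers1989monodromy}. The paper simply invokes that reference for the shift step, whereas you spell out the intertwining identity $(\theta+\alpha_i)L(\alpha;\beta)=L(\alpha';\beta)(\theta+\alpha_i)$ and the isomorphism criterion explicitly; your decreasing-order matching within a residue class is a concrete way to organise the iteration that the paper leaves implicit. The bookkeeping concern you raise at the end --- that an isomorphism of cores need not lift after composing with a first-order factor --- is real, but the paper does not address it either, and your suggested fix (carry the explicit operator identities rather than merely the local-system isomorphisms) is the right one: since each shift is an honest intertwining of operators, one can first shift all resonant pairs in the full $L(\alpha;\beta)$ into adjacent position and only then factor, avoiding the lifting issue altogether.
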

\begin{proof} This follows directly from the proof of \cite[Proposition 2.7]{beukers1989monodromy}. More precisely, if we let $V({ \alpha};{ \beta})$ denote the local system of solutions of $L({\alpha};{\beta})$, suppose without loss of generality, that $\beta_n - \alpha_n = m \in \mathbb{Z}$. If $m = -1$ then it is a direct observation that 
\[
L(\alpha_1,\dots, \alpha_n; \beta_1,\dots, \alpha_n+1) = L(\alpha_1,\dots,\alpha_{n-1};\beta_1,\dots, \beta_{n-1})(\theta + \beta_n). 
\]
and if $m = 0$, we see that 
\[
L(\alpha_1,\dots, \alpha_n; \beta_1,\dots, \alpha_n) = (\theta + \beta_{n-1}+1)L(\alpha_1,\dots,\alpha_{n-1};\beta_1,\dots, \beta_{n-1}). 
\]
If $m\neq -1$ then the proof of loc. cit. provides an isomorphism between the local systems $V({\alpha};{ \beta}) \cong V(\check{ \alpha};{ \beta})$ where 
\[
\check{ \alpha} = (\alpha_1,\dots, \alpha_n-m+1)
\]
if $m \leq -2$ and $V({ \alpha};{ \beta}) \cong V({ \alpha};\check{ \beta})$ where
\[
\check{ \beta} = (\beta_1,\dots, \beta_n+m-1)
\]
if $m \geq 0$. Repeating this procedure for every pair $\alpha_i, \beta_j$ so that $\alpha_i -  \beta_j \in \mathbb{Z}$, we find that $V({\alpha};{\beta})$ is isomorphic to the local system of solutions of an equation of the form \eqref{e:factored-ode}. 
\end{proof}

The factorization in \eqref{e:factored-ode} determines a filtration of local systems on $\mathbb{P}^1\smallsetminus \{0,1,\infty\}$:
\begin{equation}\label{eq: filtration coming from factorization}
0 \subseteq V_1 \subseteq V_2 \subseteq V_3 = V(\alpha;\beta), 
\end{equation}
where 
\begin{equation}\label{eq: gradeds of filtration}
    V_1\cong \mathrm{Sol}\left( \prod_{i=1}^b (\theta+\delta_i) \right), \quad V_2/V_1 \cong \mathrm{Sol}(L(\tilde{\alpha};\tilde{\beta})), \quad V_3/V_2 \cong \mathrm{Sol}\left( \prod_{j=1}^a (\theta + \gamma_j) \right).
\end{equation}
Let $L$ denote the monic operator of lowest degree in $\overline{\mathbb{Q}}(x)\left[\frac{d}{dx}\right]$ annihilating $F(\alpha;\beta\mid x)$. By the division algorithm for differential operators, we may write $L(\alpha;\beta) = L'L$ for some operator $L'$. Let $V$ denote $\mathrm{Sol}(L)$. Therefore $V$ is also a local subsystem of $L(\alpha;\beta)$.
\begin{lemma}\label{l: surjective map onto reduced weights}
    Suppose $\prescript{}{n}F_{n-1}(\alpha;\beta\mid x)$ is not rational. The local system $V(\tilde{\alpha};\tilde{\beta})$ is isomorphic to a subquotient of $V$.
\end{lemma}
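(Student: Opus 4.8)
The plan is to intersect $V$ with the filtration \eqref{eq: filtration coming from factorization} and then exploit the special form of the local monodromy of $L(\alpha;\beta)$ at $x=1$. Recall that $V$ is a sub-local system of $V_3 = V(\alpha;\beta)$ because $L(\alpha;\beta) = L'L$. Intersecting the chain $0 \subseteq V_1 \subseteq V_2 \subseteq V_3$ with $V$ gives a filtration $0 \subseteq V\cap V_1 \subseteq V\cap V_2 \subseteq V$ of $V$ by sub-local systems, and $(V\cap V_2)/(V\cap V_1)$ embeds into $V_2/V_1 \cong V(\tilde\alpha;\tilde\beta)$ by \eqref{eq: gradeds of filtration}. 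Since $(\tilde\alpha;\tilde\beta)$ is nonresonant, $V(\tilde\alpha;\tilde\beta)$ is irreducible by \cite[Proposition 3.3]{beukers1989monodromy}, so $(V\cap V_2)/(V\cap V_1)$ is either $0$ or all of $V(\tilde\alpha;\tilde\beta)$. Thus it suffices to show $V\cap V_2 \not\subseteq V_1$. (If $\tilde\alpha$ is empty then $V(\tilde\alpha;\tilde\beta) = 0$ and there is nothing to prove, so assume $\tilde\alpha \neq \emptyset$.)

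The key input is that the monodromy $T_1$ of $L(\alpha;\beta)$ is a pseudo-reflection, i.e. $T_1 - 1$ has rank $\leq 1$ on $V_3$ (Levelt's analysis of the hypergeometric monodromy \cite{levelt1961hypergeometric}). Moreover $T_1 \neq 1$ on $V_3$ under our hypothesis: if $T_1$ were trivial then, since $\prescript{}{n}F_{n-1}(\alpha;\beta\mid x)$ is a power series and hence fixed by $T_0$, it would be invariant under all of $\pi_1(\mathbb{P}^1 \smallsetminus \{0,1,\infty\})$ and therefore a single-valued solution of a Fuchsian equation, i.e. rational, contradicting the hypothesis. So $\im(T_1 - 1)$ is a line $\C e \subseteq V_3$. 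Next I would observe that $e \in V_2 \smallsetminus V_1$: the subquotient $V_2/V_1 \cong V(\tilde\alpha;\tilde\beta)$ is again a nonresonant hypergeometric local system of rank $\geq 1$, so $T_1$ is nontrivial on it; hence $(T_1-1)V_2 \not\subseteq V_1$, and since $(T_1-1)V_2 \subseteq (T_1-1)V_3 = \C e$ this forces $(T_1-1)V_2 = \C e$, whence $e \in V_2$ and $e \notin V_1$.

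It remains to prove $e \in V$. Since $V$ is $T_1$-stable, $(T_1-1)V$ is a subspace of $\C e$, hence equals $0$ or $\C e$. If $(T_1-1)V = \C e$, then $e \in (T_1-1)V \subseteq V$, so $e \in (V\cap V_2)\smallsetminus V_1$ and we are done by the first paragraph. If $(T_1-1)V = 0$, then $T_1$ acts trivially on $V$; in particular $\prescript{}{n}F_{n-1}(\alpha;\beta\mid x) \in V$ is fixed by $T_1$ and by $T_0$, hence by all of $\pi_1$, and is therefore rational, again contradicting the hypothesis. So this case does not occur, which completes the proof.

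The step I expect to require the most care is establishing the two monodromy facts — that $T_1$ is a pseudo-reflection on $V(\alpha;\beta)$, and that it stays nontrivial on $V(\tilde\alpha;\tilde\beta)$ — uniformly in the parameters, in particular in the genuinely resonant cases, where Levelt's theorem is usually stated only when $\alpha_i - \beta_j \notin \Z$ for all $i,j$. The route around this should be the factorization of Lemma \ref{l: factorization of hypergeometric odes}: near $x=1$ each factor $\theta + \delta_i$ and $\theta + \gamma_j$ is non-singular, so all of the extensions in \eqref{eq: filtration coming from factorization} are unramified at $x=1$; this localizes the pseudo-reflection property at $x=1$ to the nonresonant equation $L(\tilde\alpha;\tilde\beta)$, where Levelt applies directly, and it also yields nontriviality of $T_1$ on $V(\tilde\alpha;\tilde\beta)$ (in rank $1$ it is the Kummer sheaf of $(1-x)^{-\tilde\alpha_1}$ with $\tilde\alpha_1 \notin \Z$, and in rank $\geq 2$ it follows from irreducibility). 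The remaining bookkeeping — compatibility of the intersection with \eqref{eq: gradeds of filtration} and the elementary linear algebra of lines and ranks — is routine.
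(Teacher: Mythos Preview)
Your proof is correct and follows essentially the same route as the paper: reduce to showing $V\cap V_2 \not\subseteq V_1$, use that $T_1$ is a pseudo-reflection on $V(\alpha;\beta)$ whose image lands in $V_2\smallsetminus V_1$ (via nontriviality of $T_1$ on the nonresonant piece $V_2/V_1$), and conclude by observing that if $T_1$ acted trivially on $V$ then $\prescript{}{n}F_{n-1}(\alpha;\beta\mid x)$, being fixed by both $T_0$ and $T_1$, would be a single-valued solution of a Fuchsian equation and hence rational. Your final paragraph on deducing the pseudo-reflection property in the resonant case from the factorization of Lemma~\ref{l: factorization of hypergeometric odes} is a valid substitute for the paper's direct citation of \cite[Propositions 2.8, 2.10]{beukers1989monodromy}.
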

\begin{proof}
  Suppose $V\cap V_2$ is not contained in $V_1$. Then $(V\cap V_2)/(V\cap V_1)$ is a nonzero subobject of $V_2/V_1\cong V(\tilde{\alpha};\tilde{\beta})$. Since $V(\tilde{\alpha};\tilde{\beta})$ is irreducible \cite[Proposition 3.3]{beukers1989monodromy} we must have that $(V\cap V_2)/(V \cap V_1) \cong V(\tilde{\alpha};\tilde{\beta})$. The remainder of the proof shows that $V\cap V_2$ is not contained in $V_1$.

    Let us first establish notation. A matrix $T$ is a pseudo-reflection if $\mathrm{rank}(T-\mathrm{Id}) = 1$. According to e.g. \cite[Proposition 2.8, Proposition 2.10]{beukers1989monodromy}, the monodromy of a hypergeometric local system at $1 \in \mathbb{P}^1$ is either a pseudo-reflection or the identity. It is a pseudo-reflection if the hypergeometric local system is nonresonant. Therefore, if $T_1$ denotes the monodromy operator of $V_3$ at 1, $(T_1-\mathrm{Id}): V_2/V_1 \rightarrow V_2/V_1$ has rank 1. Thus $(T_1-\mathrm{Id}) : V_2 \rightarrow V_2$ has rank at least 1 and its image is not contained in $V_1$. Since $(T_1-\mathrm{Id}) : V_3\rightarrow V_3$ has rank at most 1, it follows that $(T_1-\mathrm{Id})(V_3) = (T_1-\mathrm{Id})(V_2)$. In particular, $(T_1-\mathrm{Id})(V_3) \cap V_1 =\{0\}$.

    Now let $v\in V$ and assume $V\cap V_2 \subseteq V_1$. We show first that $V$ must have trivial monodromy over 1. Note that both $V_3/V_2$ and $V_1$ are local systems which have trivial monodromy over 1. Therefore, $(T_1-\mathrm{Id})v \in V \cap V_2\subseteq V_1$. As we argued in the previous paragraph, $V_1\cap (T_1-\mathrm{Id})(V) = \{0\}$. Thus $(T_1-\mathrm{Id})v =0.$

    Therefore, $L$ is a linear differential operator whose solutions have nontrivial monodromy only at 0 and $\infty$. Since $\prescript{}{n}F_{n-1}(\alpha;\beta\mid x)$ is annihilated by $L$, and $\prescript{}{n}F_{n-1}(\alpha;\beta\mid x)$ is holomorphic at 0, it must extend meromorphically to all of $\mathbb{C}$. Since $L$ is Fuchsian, $\prescript{}{n}F_{n-1}(\alpha;\beta\mid x)$ is also meromorphic at $\infty$ so it is rational.\end{proof}

Christol defines the height of a hypergeometric function to be: 
\[
h({{\alpha},{\beta}}) = \#\{ j \in \{1,\dots,n\} \mid \beta_j \in \mathbb{Z}\} - \#\{ i \in \{1,\dots, n\} \mid \alpha_i \in \mathbb{Z}\}.
\]
We take $\beta_n = 1$. If $(\tilde{\alpha},\tilde{\beta})$ are as in the statement of Lemma \ref{l: factorization of hypergeometric odes} $|h(\alpha,\beta)| = \max\{\#\{j \mid \tilde{\beta}_j \in \mathbb{Z}\},\# \{i \mid \tilde{\alpha}_i \in \mathbb{Z}\}\}$. 

\begin{theorem}
    Suppose $\alpha_i \notin \mathbb{Z}_{<0}$ for all $i$. Then the diagonal and Hadamard grades of $\prescript{}{n}F_{n-1}(\alpha;\beta\mid x)$ are bounded below by $|h(\alpha;\beta)|$.
\end{theorem}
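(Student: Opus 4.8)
The plan is to combine Corollary \ref{c: bound on diagonal grade} with the factorization of $L(\alpha;\beta)$ from Lemma \ref{l: factorization of hypergeometric odes} and the subquotient statement from Lemma \ref{l: surjective map onto reduced weights}. By Lemma \ref{l: relate hg and dg} it suffices to prove the bound for the diagonal grade, since $\dia \le \had$; and by Corollary \ref{c: bound on diagonal grade}, $\dia(\prescript{}{n}F_{n-1}(\alpha;\beta \mid x)) \ge \Nil(\prescript{}{n}F_{n-1}(\alpha;\beta \mid x))$. So the whole task reduces to showing that the minimal operator $L$ annihilating $f = \prescript{}{n}F_{n-1}(\alpha;\beta\mid x)$ has nilpotence index at least $|h(\alpha;\beta)|$.

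The key observation is that the hypothesis $\alpha_i \notin \mathbb{Z}_{<0}$ guarantees that $f$ is genuinely not rational (otherwise $|h(\alpha;\beta)|$ could exceed the true grade). With $f$ not rational, Lemma \ref{l: surjective map onto reduced weights} tells us that $V(\tilde\alpha;\tilde\beta)$ is a subquotient of $V = \mathrm{Sol}(L)$. Now I would argue that the nilpotence index of $L$ is at least that of the nonresonant piece $L(\tilde\alpha;\tilde\beta)$: nilpotence index is monitored by the maximal size of a unipotent Jordan block of the local monodromy $T_0$ at $x=0$ (equivalently the top power of $\log x$ in the local solutions), and passing to a subquotient can only preserve or — crucially — I must check it does not \emph{increase} what we need. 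The right direction here is that a subquotient of a local system has local monodromy whose unipotent Jordan blocks have size bounded by those of the ambient system, so $\Nil(L) \ge \Nil(L(\tilde\alpha;\tilde\beta))$ — wait, that is the wrong inequality; the correct statement is that since $V(\tilde\alpha;\tilde\beta)$ is a \emph{subquotient} of $V$, any unipotent block appearing in the monodromy of $V(\tilde\alpha;\tilde\beta)$ at $0$ or $\infty$ lifts to a block of at least that size in $V$, hence $\Nil(L) \ge \Nil(L(\tilde\alpha;\tilde\beta))$. Then by Proposition \ref{p: nonresonant nilpotence} and the preceding discussion, $\Nil(L(\tilde\alpha;\tilde\beta))$ equals the number of $\tilde\beta_j$ that are integers (via $T_0$) and, by considering $T_\infty$ on the dual, is also at least the number of $\tilde\alpha_i$ that are integers; since $\Nil(M) = \Nil(M^\vee)$, we get $\Nil(L(\tilde\alpha;\tilde\beta)) \ge \max\{\#\{j : \tilde\beta_j\in\mathbb{Z}\}, \#\{i : \tilde\alpha_i\in\mathbb{Z}\}\} = |h(\alpha;\beta)|$, using the identity for $|h(\alpha;\beta)|$ recorded just before the theorem statement.

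The main obstacle I anticipate is making the subquotient/monodromy bookkeeping fully rigorous: I need to be careful that the relevant unipotent Jordan block of the nonresonant hypergeometric local system (at $x=0$ when $\tilde\beta$ has repeated integer entries, or at $x=\infty$ when $\tilde\alpha$ has repeated integer entries) really does force a unipotent block of the \emph{same} size inside $V$, rather than being split across the extension. This is where Deligne's local monodromy structure and the explicit Levelt description (single Jordan block per distinct eigenvalue) do the work: because $V(\tilde\alpha;\tilde\beta)$ sits as an honest subquotient and the monodromy at $0$ is quasi-unipotent, a maximal unipotent block of the subquotient is the image of a unipotent Jordan chain in $V$ of length at least as long. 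A secondary point to check carefully is the claim that $f$ not rational follows from $\alpha_i \notin \mathbb{Z}_{<0}$ — this should come from the fact that if all $\alpha_i \notin \mathbb{Z}_{\le 0}$ then no numerator parameter truncates the series, and combined with the height being nonzero (the only case where the bound is nonvacuous), $f$ has a genuine logarithmic singularity; I would phrase this via Lemma \ref{l: surjective map onto reduced weights}'s contrapositive. Once these two points are nailed down the theorem follows by assembling the chain of inequalities above.
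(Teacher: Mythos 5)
Your non-rational case follows the paper's route (Lemma \ref{l: surjective map onto reduced weights} gives $V(\tilde{\alpha};\tilde{\beta})$ as a subquotient of $V=\mathrm{Sol}(L)$, subquotients can only shrink unipotent Jordan blocks, Levelt's description then gives the count of integer $\tilde{\beta}_j$'s resp.\ $\tilde{\alpha}_i$'s, and Corollary \ref{c: bound on diagonal grade} finishes). But there is a genuine gap at the step you lean on hardest: the claim that $\alpha_i \notin \mathbb{Z}_{<0}$ forces $\prescript{}{n}F_{n-1}(\alpha;\beta\mid x)$ to be non-rational is false. For example $\prescript{}{1}F_0(1\mid x)=1/(1-x)$ is rational with $\alpha_1=1$. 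The hypothesis $\alpha_i\notin\mathbb{Z}_{<0}$ is there to handle the rational case, not to exclude it: when $f$ is rational the grades are $0$, and one must prove that then $|h(\alpha;\beta)|=0$. This is a substantive argument (the paper adapts \cite{cattani2001rational}: write $f=P(x)/(1-x)^k$ with $k\geq 1$ because no $\alpha_i$ truncates the series, observe the coefficients are eventually given by a polynomial $f(z)$, equate the rational functions $f(z+1)/f(z)$ and $\prod(\alpha_i+z)/\prod(\beta_i+z)$, and count zeros and poles in each coset $z_0+\mathbb{Z}$ to conclude the integer $\alpha$'s and $\beta$'s are equinumerous). Your proposed fallback does not supply this: Lemma \ref{l: surjective map onto reduced weights} \emph{assumes} $f$ is not rational, so its contrapositive cannot be used to prove non-rationality, and the ``nonzero height forces a logarithmic singularity of $f$'' heuristic fails already for $|h|=1$ (one integer $\tilde{\beta}$, no integer $\tilde{\alpha}$), where there is no log anywhere and the issue is precisely whether the \emph{minimal} operator of $f$ retains the relevant local structure.

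A secondary flaw: your justification of the bound at $\infty$ via ``$T_\infty$ on the dual'' together with $\Nil(M)=\Nil(M^\vee)$ does not work. Dualizing a differential module preserves the unipotent block structure of the local monodromy at each point separately; it does not exchange $0$ and $\infty$, so it gives no information about nilpotence at $0$ from $T_\infty$. What is actually used is that the monodromy-theoretic lower bound coming from Theorem \ref{t: nilpotence bound for diagonal} (Deligne's theorem) constrains the local monodromy of the Picard--Fuchs equation at $\infty$ as well as at $0$; note the two counts $\#\{\tilde{\beta}_j\in\mathbb{Z}\}$ and $\#\{\tilde{\alpha}_i\in\mathbb{Z}\}$ cannot both be nonzero by nonresonance, so one genuinely needs the point $\infty$ when the integer parameters sit among the $\tilde{\alpha}_i$. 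As written, your chain of inequalities is broken at this step and at the rational case, so the proposal does not yet prove the theorem.
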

\begin{proof}
    Suppose $\prescript{}{n}F_{n-1}(\alpha;\beta \mid x)$ is not rational. By Lemma \ref{l: surjective map onto reduced weights}, $V(\tilde{\alpha};\tilde{\beta})$ is a subquotient of $V$.
    From this we see that the nilpotent index of $L$ is greater
    than the nilpotent index of $L(\widetilde{\alpha},\widetilde{\beta})$ at $0$ and at $\infty$. Levelt's Theorem \cite{levelt1961hypergeometric} says the nilpotence
    index of $L(\widetilde{\alpha},\widetilde{\beta})$ at zero is at least $\#\{i \mid \tilde{\beta}_i \in \mathbb{Z}\}$ and the nilpotence
    index of $L(\widetilde{\alpha},\widetilde{\beta})$ at $\infty$ is at least $ \#\{i \mid \tilde{\alpha}_i \in \mathbb{Z}\}$. Applying Corollary \ref{c: bound on diagonal grade} gives the result.

        Now we address the case where $\prescript{}{n}F_{n-1}(\alpha;\beta\mid x)$ is rational. In this case $\mathbf{hg} = \mathbf{dg} =0$ so it remains to show that $h(\alpha;\beta) = 0$. Our proof is an adaptation of the proof of \cite[Theorem 2.3]{cattani2001rational}. Since $\prescript{}{n}F_{n-1}(\alpha;\beta\mid x)$ is rational and holomorphic at 0, and it is a solution to $L(\alpha;\beta)$ whose singularities appear at $0,1,\infty$, we must have $\prescript{}{n}F_{n-1}(\alpha;\beta\mid x) = P(x)/(1-x)^k$ for some $k$ and $P(x)$ a polynomial in $x$. Since $\alpha_i \notin \mathbb{Z}_{<0}$ for all $i$, we see that $\prescript{}{n}F_{n-1}(\alpha;\beta\mid x)$ is not a polynomial and thus $k\geq 1$. If we write $P(x)/(1-x)^k = \sum_{i=0}^\infty a_ix^i$, one can show that there is a polynomial $f(z)$ so that $f(i) = a_i$ for $i\gg 0$. In fact, since \[\frac{(-1)^{k-1}}{(k-1)!}\frac{d}{dx}^{k-1} \frac{1}{1-x} = \frac{1}{(1-x)^k}\] we have
        \[ \frac{1}{(1-x)^k} = (-1)^{k-1} \sum_{i\geq 0} b(i)x^i,\]
        where $(k-1)!b(x)=(x+1)\cdots (x+k-1)$. Then if $P(x) = \sum_{i=0}^d c_ix^i$ we find that for $i> d-k$ we have $a_i=f(i)$ with
        \[ f(x) = \sum_{i=0}^d c_ib(x-i).\]
        In other words,
    \[f(i) = \dfrac{(\alpha_1)_i\dots (\alpha_n)_i}{(\beta_1)_i\dots (\beta_{n-1})_i (1)_i},\qquad i \gg 0.  \]
    We let
    \[ \gamma(z) = \dfrac{f(z+1)}{f(z)},\qquad \eta(z) = \dfrac{\prod_{i=1}^n  (\alpha_i + z)}{\prod_{i=1}^n (\beta_i + z)}. \]
    Since $\gamma(z)$ and $\eta(z)$ are rational functions which agree for infinitely many complex numbers, they must be equal. It is straightforward to check \cite[Lemma 2.1]{vanderput-Singer-book-difference} that for a rational function of the form $h(z):=g(z+1)/g(z), g(z) \in \mathbb{C}(z)$, 
    \[\sum_{\alpha \in z_0+\mathbb{Z}} \mathrm{ord}_\alpha(h) = 0\]
    for any $z_0 \in \mathbb{C}.$ Therefore, since the poles of $\gamma(z)$ appear at $\{-\beta_j \mid j=1,\dots n\}$ and the zeroes appear at $\{-\alpha_i \mid i=1,\dots, n\}$, for each $z_0 \in \mathbb{C}$,
    \[
    \#\{j\in  \{1,\dots,n\}\mid \beta_j \equiv z_0  \bmod \mathbb{Z}\} = \#\{i\in  \{1,\dots,n\}\mid \alpha_i \equiv z_0  \bmod \mathbb{Z}\}. 
    \]
    If $z_0 = 0$, this is just the statement that $h(\alpha;\beta) = 0$.
\end{proof}

\subsection{Other examples}
We can use Corollary \ref{c: bound on diagonal grade} to determine
the diagonal grade of other $D$-finite functions, provided we
have a diagonal representation with few enough variables and we understand
the local monodromy around $0$.

\begin{example}\label{e: Apery}
    Recall the Apéry numbers,
    \[
    A(n) = \sum_{k=0}^n{n \choose k}^2 {n+k \choose k}^2
    \]
    and consider the generating function
    \[
    G(x) = \sum_{n=0}^\infty A(n) x^n.
    \]
    There are several ways to represent $G(x)$ as a diagonal, e.g. \cite{Christol_diagonals},
    that have representations of $G(x)$ as a diagonal in five variables. 
    This was improved by Straub \cite[Theorem 1.1]{Straub-Apery} who
    discovered that
    \[
    G(x) = \Delta_3\left(\dfrac{1}{(1-x_0-x_1)(1-x_3-x_2) + x_0x_1x_2x_3}\right).
    \]
    The recurrence for the Apéry numbers, discovered by Apéry himself, shows that $G(x)$ is a solution to the regular singular differential equation
    \[
    (x^4 - 34x^3 + x^2)\left(\dfrac{d}{dx}\right)^3 + (6x^3 - 153x^2 + 3x) \left(\dfrac{d}{dx}\right)^2 + (7x^2 - 112x + 1)\left(\dfrac{d}{dx}\right) + (x-5).
    \]
    The local exponents of this operator are all 0 at the origin, and
    we compute that $T_0$ is conjugate to a  3 dimensional Jordan block (see \cite[Corollary 2]{Beukers-Peters-Zeta3}.) Therefore, Corollary \ref{c: bound on diagonal grade} says $G(x)$ 
    has diagonal grade at most 3. Since $G(x)$ is the diagonal of a function in 4 variables, we see $\dia(G(x))=3$. In other words,
    Straub's representation of $G(x)$ cannot be improved!
\end{example}
\begin{example}\label{e: banana famiily}
    We present an infinite family of non-hypergeometric diagonals whose diagonal grade we expect to be able to compute. Define
    \[
    B_\ell(n) = \sum_{r_0+ \dots + r_\ell = n}{ n \choose r_0,\dots , r_\ell}^2,\qquad H_\ell(x) = \sum_{n=0}^\infty B_\ell(n) x^n. 
    \]
    It is not hard to check that $B_\ell(n)$ is the constant term of $g^n_\ell$ where
    \[
    g_\ell = (x_1 +\dots + x_\ell+1)\left(\dfrac{1}{x_1} + \dots + \dfrac{1}{x_\ell} +1\right)
    \]
    Suppose $f$ is a Laurent polynomial in variables $x_1,\dots, x_{m}$ so that $x_1\dots x_m f$ is a polynomial. Let $\mathrm{ct}(f^n)$ denote the constant term of $f^n$, and $\mathrm{cts}(f) = \sum_{n=0}^\infty \mathrm{ct}(f^n)x^n$. An easy geometric series argument shows that the diagonal of $1/(1-(x_1\dots x_{m+1})f)$ is $\mathrm{cts}(f)$. Therefore, since $H_\ell(x) = \mathrm{cts}(g_\ell)$ is the diagonal of a function in $\ell+1$ variables. On the other hand, it is argued in \cite[\S\S 3.1, 3.2, 3.3]{bonisch2021analytic} that $H_\ell(x)$ is a solution to a differential operator $L_\ell$ of rank $\ell$ and which has maximally unipotent monodromy at 0. In \cite[\S 4]{lairez2023algorithms}, Lairez and Vanhove compute the same differential operators for $\ell \leq 5$ and check via computer algebra calculation that they are irreducible. Therefore, $\mathbf{dg}(H_\ell(x)) = \ell$ if $\ell \leq 5$. We expect that the same statement holds for $\ell \geq 6$.
\end{example}
\begin{remark}
     The functions $H_\ell(x)$ in Example \ref{e: banana famiily}, and their generalizations, are maximal cut Feynman integrals of equal-mass banana graphs in quantum field theory. They also appear in mirror symmetry as the quantum period of certain families of Calabi--Yau varieties. See \cite{bonisch2021analytic} and the references therein.
\end{remark}

Similar examples are abundant in the literature. We point the reader to
\cite{hassani2025diagonals_conjecture} for an extensive collection of examples.

\section{The ring of diagonals} \label{s: the ring of diagonals}
In this section we establish some basic properties
about the ring $(\mathcal{D},+,*)$, where the multiplicative
operation is given by the Hadamard product.

\subsection{$\mathcal{D}$ is a filtered ring}

\begin{proposition} \label{p:filtered}
    For $k_1,k_2\geq 0$ we have $\mathcal{D}_{k_1}*\mathcal{D}_{k_2} \subseteq \mathcal{D}_{k_1 + k_2}.$
\end{proposition}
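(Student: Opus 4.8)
The plan is to push the whole computation down by one variable by working with algebraic, rather than merely rational, diagonal representations via Lemma \ref{l: relate hg and dg}; this is exactly what converts the naive bound $\dia(f*g)\le k_1+k_2+1$ into the sharp $\dia(f*g)\le k_1+k_2$. Suppose first $k_1,k_2\ge 1$, and let $f\in\mathcal D_{k_1}$, $g\in\mathcal D_{k_2}$. Since $\dia(f)\le k_1$, the implication $(II)\Rightarrow(I)$ of Lemma \ref{l: relate hg and dg}, applied with $n=k_1-1$, gives an algebraic power series $F\in K\cbrac{x_0,\dots,x_{k_1-1}}$ with $f=\Delta_{k_1-1}(F)$; likewise there is $G\in K\cbrac{y_0,\dots,y_{k_2-1}}$ with $g=\Delta_{k_2-1}(G)$, and I take the two sets of variables to be disjoint.

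Next I would form the product $F(x_0,\dots,x_{k_1-1})\cdot G(y_0,\dots,y_{k_2-1})$, an algebraic power series in the $k_1+k_2$ variables $x_0,\dots,x_{k_1-1},y_0,\dots,y_{k_2-1}$ (a product of elements algebraic over the fraction field is again algebraic), and check on Taylor coefficients that
\[
\Delta_{k_1+k_2-1}\bigl(F(x_0,\dots,x_{k_1-1})\,G(y_0,\dots,y_{k_2-1})\bigr)=f*g .
\]
This is immediate: because $F$ involves only the $x_j$ and $G$ only the $y_j$, the coefficient of $x_0^i\cdots x_{k_1-1}^i\,y_0^i\cdots y_{k_2-1}^i$ in the product is $\bigl([x_0^i\cdots x_{k_1-1}^i]F\bigr)\bigl([y_0^i\cdots y_{k_2-1}^i]G\bigr)=\bigl([x^i]f\bigr)\bigl([x^i]g\bigr)$. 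After relabelling, $FG$ lies in $K\cbrac{z_0,\dots,z_{k_1+k_2-1}}$, so the implication $(I)\Rightarrow(II)$ of Lemma \ref{l: relate hg and dg}, with $n=k_1+k_2-1$, produces a rational $h$ in $k_1+k_2+1$ variables with $f*g=\Delta_{k_1+k_2}(h)$; that is, $f*g\in\mathcal D_{k_1+k_2}$.

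The argument above needs $k_1,k_2\ge 1$, since a rational function has no algebraic diagonal representation in fewer than one variable. The remaining case $k_1=0$ (and symmetrically $k_2=0$) I would treat separately: here $f$ is rational. I would first record that $\mathcal D_k$ is stable under $\theta=x\frac{d}{dx}$ and under $x\mapsto\alpha x$ for $\alpha\in\overline{K}^{\times}$ — both obtained by applying the corresponding operation to $x_0$ in a rational representative $g=\Delta_k(S(x_0,\dots,x_k))$ — and that it becomes an $\overline{K}$-vector space after extension of scalars (diagonal grade being unaffected by finite base change, via Galois averaging of representatives). Over $\overline{K}$, a partial fraction expansion gives $[x^n]f=\sum_\mu P_\mu(n)\alpha_\mu^n$ with $P_\mu$ polynomials and $\alpha_\mu\in\overline{K}^{\times}$, whence $f*g=\sum_\mu\bigl(P_\mu(\theta)g\bigr)(\alpha_\mu x)\in\mathcal D_{k_2}$; descending to $K$ gives $f*g\in\mathcal D_{k_2}$, and when $k_1=k_2=0$ this is the classical fact that a Hadamard product of rational functions is rational.

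The step I expect to be the crux is recognizing that Lemma \ref{l: relate hg and dg} must be invoked: the obvious construction — realizing $f*g$ as $\Delta_{k_1+k_2+1}$ of the product of rational representatives of $f$ and $g$ on disjoint variables — only yields $\dia(f*g)\le k_1+k_2+1$, and trying to economize by letting a variable of one representative coincide with a variable of the other turns the coefficient extraction into a convolution rather than a Hadamard product, so the needed saving genuinely comes from the algebraic upgrade. The bookkeeping in the boundary cases and the base-change remark are the only other points that require attention.
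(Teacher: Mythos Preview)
Your proof is correct and, for $k_1,k_2\ge 1$, follows exactly the paper's argument: pass from rational to algebraic diagonal representations in one fewer variable via Lemma~\ref{l: relate hg and dg}, multiply on disjoint variable sets to realize $f*g$ as $\Delta_{k_1+k_2-1}$ of an algebraic function, and reapply the lemma to land in $\mathcal{D}_{k_1+k_2}$. You are in fact more careful than the paper, which invokes the lemma with $n=k_i-1$ without separately treating the boundary cases $k_1=0$ or $k_2=0$; your partial-fraction argument (stability of $\mathcal{D}_k$ under $\theta$ and rescaling, plus Galois descent) cleanly fills that gap.
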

\begin{proof}
    This is an application of Lemma \ref{l: relate hg and dg}.
    Let $f_1 \in \mathcal{D}_{k_1}$ and $f_2\in \mathcal{D}_{k_2}$.
    By Lemma \ref{l: relate hg and dg} there are functions
    $g_1 \in K\cbrac{x_0,\dots,x_{k_1-1}}$ and $g_2 \in K\cbrac{x_0,\dots,x_{k_2-1}}$ with $f_1=\Delta_{k_1-1}(g_1)$ and 
    $f_2=\Delta_{k_2-1}(g_2)$. As
    $g_1(x_0,\dots, x_{k_1-1})\cdot g_2(x_{k_1}, \dots, x_{k_1+k_2-1})$
    is in $K\cbrac{x_0,\dots,x_{k_1+k_2-1}}$, we know
    from Lemma \ref{l: relate hg and dg} that there exists $h \in
    K[x_0,\dots,x_{k_1+k_2}]_{(x_0,\dots,x_{k_1+k_2})}$ with
    \[ \Delta_{k_1+k_2-1}(g_1(x_0,\dots, x_{k_1-1})\cdot g_2(x_{k_1}, \dots, x_{k_1+k_2-1})) = \Delta_{k_1+k_2}(h).\]
    We then compute
    \begin{align*}
        f_1 * f_2& = \Delta_{k_1-1}(g_1) * \Delta_{k_2-1}(g_2) \\
        &=  \Delta_{k_1+k_2-1}(g_1(x_0,\dots, x_{k_1-1})\cdot g_2(x_{k_1}, \dots, x_{k_1+k_2-1})) \\
        &= \Delta_{k_1+k_2}(h).\qedhere
    \end{align*}
\end{proof}

\begin{remark}
    We know that $\mathcal{H}_1=\mathcal{D}_1$. Thus, if
    the inclusion in Proposition \ref{p:filtered} were
    an equality, we would have $\mathcal{H}_k=\mathcal{D}_k$ for
    all $k\geq 0$. Rivoal and Roques \cite{rivoal2014hadamard} show that, under the assumption of the Lang--Rohrlich conjecture, the hypergeometric function  $\prescript{}{3}F_{2}(\frac{1}{7},\frac{2}{7}, \frac{4}{7};\frac{1}{2},1\mid x)$ 
    has infinite Hadamard grade. However, $\prescript{}{3}F_{2}(\frac{1}{7},\frac{2}{7}, \frac{4}{7};\frac{1}{2},1\mid x)$ is globally bounded
    by work of Christol \cite{Christol_bounded_hypergeometric}.
    Thus, if one believes Christol's conjecture that globally bounded
    functions are diagonals and the Lang--Rohrlich conjecture, we see that being a diagonal
    does not imply finite Hadamard grade. In particular, the inclusion
    in Proposition \ref{p:filtered} is not an equality. However,
    it is worth pointing out that Christol suggested 
    $\prescript{}{3}F_{2}(\frac{1}{7},\frac{2}{7}, \frac{4}{7};\frac{1}{2},1\mid x)$ as a potential counterexample to his conjecture,
    and thus far it is unknown if it is a diagonal.  
\end{remark}

\subsection{The action of $\mu_m$ on $\mathcal{D}$ and zero divisors}
For any $m\geq 1$ and $r$ satisfying $0\leq r \leq m-1$ we define
a $K$-linear operator as follows:
\begin{align*}
    e_{r,m}:K\dbrac{x} &\to K\dbrac{x},\\
    \sum_{k=0}^\infty a_k x^k & \mapsto \sum_{k=0}^\infty a_{r+km}x^k.
\end{align*}
Let $m_0 \geq 1$ be coprime to $m$ and let $s$ satisfy $0\leq s \leq m_0-1$ we have
\begin{equation*}
    e_{s,m_0}\circ e_{r,m} = e_{r,m}\circ e_{s,m_0} = e_{t,m_0m},
\end{equation*}
where $t$ is the unique integer $0\leq t \leq mmm_0-1$ that is congruent
to $s$ modulo $m_0$ and $r$ modulo $m$. Furthermore, we have a decomposition:
\begin{align}\label{e: eigen decomposition}
    f(x) &= \sum_{r=0}^{m-1} x^r e_{r,m}(f)(x^m). 
\end{align}

\begin{remark}
    Let $f(x) \in K\dbrac{x}$ be a $D$-finite series and let $N_f$
    be the differential module as defined in Definition \ref{d: nilpotence exponent}. Let $[m]:\Spec(K(x)) \to \Spec(K(x))$ be the map
    corresponding to $x \mapsto x^m$. Then $N_{e_{r,m}(f)}$ is a direct factor of $[m]_*N_f$, the pushforward of $N_f$ along $[m]$. In particular, we see that $e_{r,m}(f)$ is again $D$-finite. 
\end{remark}

\begin{proposition}
    \label{p: e preserves diagonal}
    Let $f \in \mathcal{D}_k$. Then $e_{r,m}(f) \in \mathcal{D}_k$.
    Similarly, if $f \in \mathcal{H}_k$ then $e_{r,m}(f) \in \mathcal{H}_k$. 
\end{proposition}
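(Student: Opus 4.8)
The plan is to realize $e_{r,m}(f)$ as a diagonal in the same number of variables as $f$ by pulling back along the map $x_i \mapsto x_i^m$. Suppose $f = \Delta_k(h)$ with $h \in K\cbrac{x_0,\dots,x_k}$ (or rational, in the Hadamard-analogous setup we pass through Lemma \ref{l: relate hg and dg}; see below). Write $h = \sum a_{i_0,\dots,i_k} x_0^{i_0}\cdots x_k^{i_k}$, so $f = \sum_i a_{i,\dots,i} x^i$. Consider $h(x_0^m,\dots,x_k^m) = \sum a_{i_0,\dots,i_k} x_0^{mi_0}\cdots x_k^{mi_k}$; its diagonal is $\sum_i a_{i,\dots,i} x^{mi} = f(x^m)$. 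To extract the subseries $\sum_i a_{mi+r,\dots,mi+r} x^i$ we need to multiply by a monomial shifting the exponents by $r$ in each slot and then take a ``twisted'' diagonal. Concretely, set
\[
\widetilde h(x_0,\dots,x_k) = x_0^{r}\cdots x_k^{r}\, h(x_0^m,\dots,x_k^m).
\]
This lies in $K\cbrac{x_0,\dots,x_k}$ since the class of algebraic series is closed under the substitution $x_i \mapsto x_i^m$ and under multiplication by polynomials. Its diagonal is $\Delta_k(\widetilde h) = \sum_i a_{i,\dots,i} x^{mi + (k+1)r}$, which is $x^{(k+1)r} f(x^m)$ — not quite what we want. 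The fix is to observe that $e_{r,m}(f)(x)$ is obtained from $x^{-r} \cdot (\text{coefficient extraction of the }r\text{-th residue class of }f)(x)$ and that coefficient extraction in a single residue class mod $m$ of a diagonal is itself a diagonal: by \eqref{e: eigen decomposition} applied to $h$ in a single variable it suffices to handle each $e_{r,m}$ separately, and the identity $\Delta_k\bigl(x_0^{r}\cdots x_k^{r} h(x_0^m,\dots,x_k^m)\bigr)$ only picks out diagonal coefficients whose index is $\equiv r \pmod m$ after division — so after the harmless substitution $x \mapsto x$ and stripping the overall power of $x$ (which preserves being a diagonal of the same grade, as multiplication of a diagonal by $x^N$ is $\Delta_k(x_0 \cdots x_k \cdot (\text{shifted }h))$ up to reindexing), we land on $e_{r,m}(f)$.

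The cleanest way to organize this, which I would actually write up, is: first prove the special case $m$ arbitrary, $r = 0$, where $e_{0,m}(f)(x) = \Delta_k(h(x_0^m,\dots,x_k^m))(x)$ composed with $x \mapsto x^{1/m}$ — more precisely $\Delta_k(h(x_0^m,\dots,x_k^m)) = f(x^m)$ and then note $e_{0,m}(f)$ is the unique series with $e_{0,m}(f)(x^m) = $ (the $m$-divisible part of $f$), which one extracts via averaging over $\mu_m$: $e_{0,m}(f)(x^m) = \frac{1}{m}\sum_{\zeta^m = 1} f(\zeta x)$, and each $f(\zeta x)$ is trivially a diagonal in $k$ variables (substitute $x_0 \mapsto \zeta x_0$). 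For general $r$, use \eqref{e: eigen decomposition}: $f(x) = \sum_{r=0}^{m-1} x^r e_{r,m}(f)(x^m)$, so $x^r e_{r,m}(f)(x^m) = \frac{1}{m}\sum_{\zeta^m=1}\zeta^{-r} f(\zeta x)$, exhibiting $x^r e_{r,m}(f)(x^m)$ as a $K$-linear combination — hence (since $\mathcal D_k$ is a $K$-vector space) an element of $\mathcal D_k$; then divide by $x^r$ and substitute $x \mapsto x^{1/m}$, both of which I must check preserve $\mathcal D_k$.

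For the Hadamard-grade statement, I would run the parallel argument using that $\mathcal H_1 = \mathcal D_1$ is closed under $f(x)\mapsto f(\zeta x)$ and under the operations $f \mapsto f(x^m)$-type manipulations on a single algebraic series, and that if $f = h_1 * \cdots * h_k$ then $e_{r,m}(f) = e_{r_1,m}(h_1) * \cdots * e_{r_k,m}(h_k)$ for suitable $r_1+\dots+r_k \equiv r$; since each $e_{r_i,m}(h_i)$ of an algebraic series is algebraic (as noted in the Remark preceding the Proposition, $N_{e_{r,m}(g)}$ is a direct factor of $[m]_* N_g$, and a pushforward of a rank-one... more generally a subquotient of the pushforward of an algebraic-solution module has algebraic solutions — but the quickest route is just: the $\mu_m$-averaging formula $e_{r,m}(h_i)(x^m) = \frac1m\sum_\zeta \zeta^{-r} h_i(\zeta x)$ expresses it as a finite sum of algebraic functions, and a finite sum of algebraic functions is algebraic). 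The main obstacle I anticipate is the bookkeeping in the Hadamard case: verifying that $e_{r,m}(h_1 * \cdots * h_k)$ genuinely decomposes as a single Hadamard product of $k$ factors (rather than a sum of such products, which would not immediately bound $\had$) — this requires matching up residue classes $r_1,\dots,r_k$ with $\sum r_i \equiv r \pmod m$ and checking that the cross terms vanish, i.e. that $e_{r,m}(g_1 g_2) = \sum_{r_1 + r_2 \equiv r} e_{r_1,m}(g_1)\, x^{\lfloor (r_1+r_2)/m\rfloor}\cdot(\text{correction})$; the careful version is that on the level of the single variable, $(g_1 * g_2)$ has $n$-th coefficient $a_n b_n$, and $e_{r,m}$ picks $a_{r+km}b_{r+km} = (e_{r,m}g_1)_k (e_{r,m}g_2)_k$, so in fact $e_{r,m}(g_1 * g_2) = e_{r,m}(g_1) * e_{r,m}(g_2)$ exactly, with no sum — so the Hadamard case is actually cleaner than the diagonal case, and the only real point is the closure of algebraic series under $e_{r,m}$, handled by the averaging formula.
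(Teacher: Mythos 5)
Your univariate reductions do not close the argument for the diagonal statement. After writing $x^r e_{r,m}(f)(x^m)=\frac{1}{m}\sum_{\zeta^m=1}\zeta^{-r}f(\zeta x)$, observing that each $f(\zeta x)$ lies in $\mathcal{D}_k$ and that $\mathcal{D}_k$ is a $K$-vector space, you are left with exactly the two operations you explicitly defer: dividing by $x^r$ and undoing the substitution $x\mapsto x^m$. But for a series $F=x^rG(x^m)$ supported in the residue class $r$ modulo $m$, the passage from $F\in\mathcal{D}_k$ to $G\in\mathcal{D}_k$ \emph{is} the statement $e_{r,m}(F)\in\mathcal{D}_k$, i.e.\ the proposition itself for such $F$; deferring it makes the argument essentially circular. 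That step is where the real, multivariate work lies, and it is what the paper's proof actually does: starting from a rational $h$ with $f=\Delta_n(h)$, it divides by the monomial $(x_0\cdots x_n)^r$ and averages over $m$-th roots of unity acting on the variables $x_i$ so that the result can be written as $g(x_0^m,\dots,x_n^m)$ for a rational function $g$ in the \emph{same} number of variables, and then identifies $\Delta_n(g)=e_{r,m}(f)$. Nothing in your write-up produces the required $(k+1)$-variable rational (or algebraic) function: your first attempt with $\widetilde h=x_0^r\cdots x_k^r\,h(x_0^m,\dots,x_k^m)$ only yields $x^rf(x^m)$ (incidentally its diagonal is $x^rf(x^m)$, not $x^{(k+1)r}f(x^m)$ --- the diagonal records the common exponent, not the total degree), and the subsequent ``strip the power of $x$ and de-substitute'' steps are asserted, not proved. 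To repair the proof, carry out the multivariate construction directly: set $h_0=h/(x_0\cdots x_k)^r$, average $h_0$ over $\mu_m^{k+1}$ acting on the variables (the negative exponents, all $\geq -r>-m$, are killed, and the average is a function of $x_0^m,\dots,x_k^m$ alone), define $g$ by this identity, and check $\Delta_k(g)=e_{r,m}(f)$; the algebraic case is identical.

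By contrast, your Hadamard argument is essentially complete, and it supplies detail the paper's proof leaves implicit: $e_{r,m}(g_1*g_2)=e_{r,m}(g_1)*e_{r,m}(g_2)$ holds coefficientwise with the same $r$ (your corrected claim), and $e_{r,m}$ of an algebraic series is algebraic because $x^re_{r,m}(h)(x^m)$ is a finite sum of the algebraic series $h(\zeta x)$, and algebraicity descends through $x^m\mapsto x$ since $K(x)$ is finite over $K(x^m)$ --- for a single univariate algebraic series this de-substitution is harmless, unlike its diagonal-grade analogue above. (Both your argument and the paper's quietly use $m$-th roots of unity that need not lie in $K$; a Galois-averaging or field-enlargement remark fixes this in either approach.)
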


\begin{proof}
    Let $h \in K(x_0,\dots,x_n)$ be a rational function with $f=\Delta_n(h)$. Let $h_0=\frac{h}{(x_0\dots x_n)^r}$. There
    exists $g \in K(x_0,\dots,x_n)$ such that
    \begin{equation*}
        g(x_0^{m}, \dots, x_n^{m}) = \frac{1}{m(n+1)}\sum_{i=0}^n \sum_{\zeta^m=1} h_0(x_0, \dots, x_{i-1}, \zeta x_i, x_{i+1}, \dots, x_n).
    \end{equation*}
    By construction we see that $e_{r,m}(f)=\Delta_n(g)$.
\end{proof}

By combining Proposition \ref{p: e preserves diagonal} with equation 
\eqref{e: eigen decomposition} we see that $\mathcal{D}$ has many zero divisors. Indeed,
for $0 \leq r,s \leq m-1$ with $r \neq s$ we have
\begin{align*}
    [x^r e_{r,m}(f)(x^m)] * [x^se_{s,m}(g)(x^m)] &= 0. 
\end{align*}
It turns out that this explains all the zero divisors in $\mathcal{D}$. 
\begin{theorem}
    \label{t: zero divisors}
    Let $f,g \in \mathcal{D}$. The following are equivalent:
    \begin{enumerate}[label=(\Roman*)]
        \item $f * g = 0$.
        \item There exists $n$ and subsets $F,G \subset \{0,\dots,m-1\}$ such that $F \cup G = \{0,\dots,m-1\}$
        and for $r \in F$ (resp. $s \in G$) we have
        $e_{r,m}(f)=0$ (resp. $e_{s,m}(f) = 0$). 
    \end{enumerate}
\end{theorem}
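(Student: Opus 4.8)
The plan is to prove the two implications separately; $(\mathrm{II})\Rightarrow(\mathrm{I})$ is formal, while $(\mathrm{I})\Rightarrow(\mathrm{II})$ is the substantial direction and rests on the structure theory of $D$-finite coefficient sequences rather than on the differential-module machinery of \S\ref{s: local monodromy bounds}. For $(\mathrm{II})\Rightarrow(\mathrm{I})$, I would use that the Hadamard product is $K$-bilinear and apply the eigendecomposition \eqref{e: eigen decomposition} to both $f$ and $g$ for the given modulus $m$, obtaining
\[
f*g \;=\; \sum_{r,s=0}^{m-1}\bigl(x^{r}e_{r,m}(f)(x^{m})\bigr)*\bigl(x^{s}e_{s,m}(g)(x^{m})\bigr).
\]
When $r\neq s$ the two factors are supported on disjoint sets of exponents (those $\equiv r$ and $\equiv s \bmod m$), so that term is $0$; when $r=s$ the term equals $x^{r}\bigl(e_{r,m}(f)*e_{r,m}(g)\bigr)(x^{m})$, and since $r\in F\cup G=\{0,\dots,m-1\}$ one of $e_{r,m}(f)$, $e_{r,m}(g)$ vanishes, so the term is $0$ again. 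Hence $f*g=0$.

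For $(\mathrm{I})\Rightarrow(\mathrm{II})$ I would first translate the hypothesis into a statement about supports. Writing $f=\sum a_{k}x^{k}$, $g=\sum b_{k}x^{k}$, the identity $f*g=0$ says $a_{k}b_{k}=0$ for all $k$, i.e. the coefficient supports $S_{f}=\{k:a_{k}\neq 0\}$ and $S_{g}=\{k:b_{k}\neq 0\}$ are disjoint. The target is to produce a single modulus $m$ for which the reductions $S_{f}\bmod m$ and $S_{g}\bmod m$ remain disjoint: then setting $F$ to be the complement of $S_{f}\bmod m$ and $G$ the complement of $S_{g}\bmod m$ inside $\{0,\dots,m-1\}$ gives $F\cup G=\{0,\dots,m-1\}$ with $e_{r,m}(f)=0$ for $r\in F$ and $e_{s,m}(g)=0$ for $s\in G$. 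The key input is that every element of $\mathcal{D}$ is $D$-finite, hence its coefficient sequence is $P$-recursive; the asymptotic theory of $D$-finite series then says that after restricting to a suitable arithmetic progression the dominant singular exponents align and the coefficients acquire a genuine asymptotic expansion $a_{r+km}\sim\sum_{j}C_{j}\,k^{\theta_{j}}\rho_{j}^{k}(\log k)^{e_{j}}$, forcing $S_{f}$ to be a union of residue classes modulo some $m_{f}$ outside a finite exceptional set, and similarly $S_{g}$ outside a finite set modulo $m_{g}$. Taking $m$ a common multiple of $m_{f}$ and $m_{g}$, chosen large enough to separate the two finite exceptional sets, should then upgrade disjointness of $S_{f},S_{g}$ to disjointness of their reductions mod $m$.

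The main obstacle is exactly this structural step together with the honest treatment of the ``polynomial-like'' isotypic components. For a general $P$-recursive sequence the zero set need not be eventually periodic, and a component $e_{r,m}(f)$ may be a nonzero polynomial occupying a residue class that $S_{g}$ nevertheless meets cofinitely, so the naive ``enlarge $m$'' move can fail; making the argument work requires pinning down precisely when such degenerate components arise for $f,g\in\mathcal{D}$ — for instance by first peeling off finitely many monomials so as to reduce to the case where no isotypic component is a polynomial — and then checking that the remaining finitely many exceptional exponents on each side can always be absorbed into the residue-class description by passing to a multiple of $m$. I expect essentially all of the difficulty to be concentrated here; once the support of each element of $\mathcal{D}$ is known to be a finite modification of a union of arithmetic progressions, the construction of $m$, $F$, $G$ and the verification of $(\mathrm{II})$ are bookkeeping, and combined with the remark preceding the statement this yields the promised description of all zero divisors of $\mathcal{D}$ in terms of the $\mu_{m}$-action.
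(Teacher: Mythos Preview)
Your treatment of $(\mathrm{II})\Rightarrow(\mathrm{I})$ is fine and matches the paper. The problem is in $(\mathrm{I})\Rightarrow(\mathrm{II})$: you are aiming at an intermediate statement that is strictly stronger than what is needed and, as far as is known, unavailable. You want the support $S_f$ of each $f\in\mathcal{D}$ to be a union of residue classes modulo some $m_f$ up to a finite exceptional set; equivalently, that the zero set of a $P$-recursive sequence arising from a diagonal is eventually periodic. That is the full Skolem--Mahler--Lech statement for such sequences, and it is not known---asymptotic expansions of the sort you invoke do not by themselves preclude infinitely many sporadic zeros inside a single residue class, so the ``forcing $S_f$ to be a union of residue classes\ldots outside a finite exceptional set'' step is the gap, not merely the main difficulty. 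Your own discussion of the obstacle essentially concedes this.

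The paper sidesteps the issue entirely by using a \emph{weaker} structural input: B\'ezivin's theorem, valid for $D$-finite series with regular singularities at $0$ and $\infty$ (hence for diagonals, via the Picard--Fuchs realization), which says only that for some $m$ each $e_{r,m}(h)$ is either identically zero or has a zero set of density zero. This weaker dichotomy suffices: take $m$ working for both $f$ and $g$, let $F=\{r:e_{r,m}(f)=0\}$ and $G=\{s:e_{s,m}(g)=0\}$; if some $s$ lies outside $F\cup G$ then both $e_{s,m}(f)$ and $e_{s,m}(g)$ have density-zero zero sets, hence so does $e_{s,m}(f)\ast e_{s,m}(g)=e_{s,m}(f\ast g)$, contradicting $f\ast g=0$. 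The point is that ``density zero'' is closed under finite unions, so you never need the zero sets to be finite. Replacing your asymptotic-expansion step with this density argument repairs the proof.
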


\begin{proof}
    It is immediate that $(II)$ implies $(I)$. For the converse,
    we use a weak generalization of the Skolem--Mahler--Lech theorem
    for certain $D$-finite series proven by Bézivin \cite{Bezivin-SML_theorem}. 
    This result says that for a $D$-finite series $h$ whose underlying
    differential equation has regular singularities at $0$ and $\infty$, there exists $m$ such that either i) $e_{r,m}(h)=0$ or ii) The coefficients of $e_{r,m}(h)$ that are zero have density zero. As any element of $\mathcal{D}$ is the solution to
    a Picard--Fuchs equation, we know that all singularities
    are regular \cite{katz_nilpotent_monodromy_theorem}, and thus we may apply Bézivin's result to
    $f$ and $g$. Choose $n$ so that Bézivin's theorem applies to both $f$ and $g$. Increasing $m$ if necessary, let $F$ be the subset of
    $\{0,\dots,m-1\}$ consisting of elements $r$ with $e_{r,m}(f)=0$.
    Define $G$ analogously. Assume there is
    an element $s \in \{0,\dots,m-1\}$ not contained in $F$ or $G$.
    Then the coefficients of $e_{s,m}(f)$ and $e_{s,m}(g)$ that are
    zero have density zero. Thus, the coefficients of $e_{s,m}(f * g)$
    that are zero have density zero. Thus, $e_{s,m}(f*g)\neq 0$, which is a contradiction. It follows that $F \cup G=\{0,\dots,m-1\}$.    
\end{proof}

\begin{corollary}
    \label{c: zero divisors} A diagonal $f \in \mathcal{D}$
    is a zero divisor if and only if there exists $r$ and $m$
    with $e_{r,m}(f)=0$.
\end{corollary}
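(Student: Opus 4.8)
The plan is to deduce this corollary directly from Theorem \ref{t: zero divisors}, which characterizes all pairs $(f,g)$ with $f*g=0$, together with the eigendecomposition \eqref{e: eigen decomposition}. Both implications are short, and all the substantive work (Bézivin's density version of Skolem--Mahler--Lech, regularity of Picard--Fuchs equations) is already packaged into Theorem \ref{t: zero divisors}.

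For the easy direction, suppose there exist $r$ and $m$ with $e_{r,m}(f)=0$. Writing $f=\sum_{k\geq 0}a_kx^k$, this says $a_{r+km}=0$ for all $k\geq 0$. I would then take the explicit witness $g(x)=x^r/(1-x^m)=\sum_{k\geq 0}x^{r+km}$, which is a nonzero rational function and hence lies in $\mathcal{D}_0\subseteq\mathcal{D}$. Computing the Hadamard product term by term gives $f*g=\sum_{k\geq 0}a_{r+km}x^{r+km}=0$, so $f$ is a zero divisor. (If one's convention requires a zero divisor to be nonzero, the case $f=0$ is trivial on both sides of the equivalence and can be set aside.)

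For the converse, suppose $f$ is a zero divisor, so there is a nonzero $g\in\mathcal{D}$ with $f*g=0$. Applying Theorem \ref{t: zero divisors} produces an $m$ and subsets $F,G\subseteq\{0,\dots,m-1\}$ with $F\cup G=\{0,\dots,m-1\}$, with $e_{r,m}(f)=0$ for all $r\in F$ and $e_{s,m}(g)=0$ for all $s\in G$. If $F$ were empty then $G=\{0,\dots,m-1\}$, and then \eqref{e: eigen decomposition} would give $g(x)=\sum_{s=0}^{m-1}x^se_{s,m}(g)(x^m)=0$, contradicting $g\neq 0$. Hence $F\neq\emptyset$, and any $r\in F$ (together with this $m$) witnesses the claim. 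There is essentially no obstacle here beyond this bookkeeping: the only facts needed are that $x^r/(1-x^m)$ is genuinely rational, hence in $\mathcal{D}$, and that a nonzero series cannot have all of its $e_{s,m}$-components vanish, which is immediate from \eqref{e: eigen decomposition}.
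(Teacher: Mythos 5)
Your argument is correct and is essentially the deduction the paper intends, since the corollary is stated as an immediate consequence of Theorem \ref{t: zero divisors}: your explicit witness $x^r/(1-x^m)\in\mathcal{D}_0$ for the ``if'' direction and the observation via \eqref{e: eigen decomposition} that $F\neq\emptyset$ (else $g=0$) for the converse are exactly the intended bookkeeping, with the trivial $f=0$ convention handled appropriately.
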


    The operators $e_{r,m}$ can be similarly used to construct 
    additional examples where $\dia(f*g) < \dia(f) + \dia(g)$ and
    $\had(f*g) < \had(f) + \had(g)$. For example,
    set
    \begin{align*}
        f(x) &= e_{0,2}\left (\frac{1}{1-x}\right)(x^2) + x e_{1,2}\left (\sqrt{1-x}\right)(x^2) \\
        g(x) &= e_{0,2}\left (\sqrt{1-x}\right))(x^2) +  xe_{1,2}\left (\frac{1}{1-x}\right)(x^2).
    \end{align*}
    Then $\dia(f)=\dia(g)=1$, but we have $f*g=\sqrt{1-x}$, which
    has diagonal grade $1$. It is natural to ask if
    this is the only `obstruction' to $\dia(f*g)$ being
    equal to $\dia(f) + \dia(g)$.
    \begin{question}
        Assume that for every $m$ there exists
        $r$ such that $\dia(e_{r,m}(f))=\dia(f)$ and
        $\dia(e_{r,m}(g))= \dia(g)$. Is it true that
        $\dia(f * g) = \dia(f) + \dia(g)$? 
        We remark
        that this assumption is clearly necessary.
    \end{question}
    It is also natural to ask if we really need to check \emph{every}
    $m$ in the above question. This leads naturally to the
    following definition and question.
    \begin{definition}
        Let $f \in \mathcal{D}$. We say that $f$ has clean
        diagonal grade $k$ if $\dia(e_{r,m}(f))=k$ 
        for every $m$ and $r$. 
    \end{definition}
    For example, one can prove that $\prescript{}{n}F_{n-1}(\frac{1}{2},\dots, \frac{1}{2};1\dots,1 \mid x)$ has clean diagonal grade
    $n$. We can ask if a diagonal $f$ always decomposes into `clean' parts.
    \begin{question}
        Let $f \in \mathcal{D}$. Does there exist $m$ such that
        $e_{r,m}(f)$ has clean diagonal grade $k_r$ for each
        $0 \leq r \leq m-1$?
    \end{question}

\bibliography{hadamard}{}
\bibliographystyle{alpha}

\end{document}